\documentclass[lettersize,journal]{IEEEtran}
\usepackage{amsmath,amsfonts}
\allowdisplaybreaks
\usepackage{algorithmic}
\usepackage{algorithm}
\usepackage{array}
\usepackage[caption=false,font=normalsize,labelfont=sf,textfont=sf]{subfig}
\usepackage{textcomp}
\usepackage{stfloats}
\usepackage{url}
\usepackage{amssymb}
\usepackage{verbatim}
\usepackage{graphicx}
\usepackage{xcolor}
\usepackage{booktabs}
\usepackage[percent]{overpic}
\usepackage{multirow}
\usepackage{cite}
\newtheorem{theorem}{\textbf{Theorem}}
\newtheorem{proposition}{\textbf{Proposition}}

\newtheorem{remark}{Remark}

\newtheorem{assumption}{Assumption}
\newcommand{\card}{\mathrm{card}}

\begin{document}

\title{ Decentralized Stability-Constrained Optimal Power Flow\\for Inverter-Based Power Systems}

\author{Shigeng Wang,~\IEEEmembership{Student Member,~IEEE,} and Sijia Geng,~\IEEEmembership{Member,~IEEE}
\thanks{Shigeng Wang and Sijia Geng are with the Department of Electrical and Computer Engineering,
Johns Hopkins University, Baltimore, MD 21218, USA. Corresponding Author: Sijia Geng (e-mail: sgeng@jhu.edu).}
}



\maketitle

\begin{abstract}
Future inverter-dominated power systems feature higher variability and more stressed operating conditions, which motivates the consideration of stability in operational settings. Existing approaches to stability-constrained OPF often rely on eigenvalue calculation, global model information, or dynamic evaluation inside optimization formulation, which are computationally intensive and difficult to scale.
This paper proposes the first decentralized stability-constrained OPF framework for inverter-based power systems. The key novelty lies in the incorporation of a class of algebraic decentralized small-signal stability criteria that admits tractable representations in steady-state variables and is therefore suitable for optimization. The decentralized stability condition is based on local voltage differences and enables clear theoretical and practical economic interpretation of the stability contribution from each inverter. We define a Nodal Stability Shadow Price (NSSP) for each inverter, and characterize the role of these stability constraints through their associated shadow prices, enabling a nodal interpretation of their economic impacts. It is proved that under active-power-only objectives in lossless networks, binding stability constraints may  occur but will admit zero shadow prices if all other operational constraints are inactive. Most importantly, we reveal the importance of considering the opportunity cost of reactive power for inverter-based resources (IBRs) that have limited capacity. When reactive power costs are considered, stability constraints can carry strictly positive shadow prices and admit meaningful economic impacts. Numerical studies on a two-bus system and the IEEE 39-bus system validate the proposed framework and illustrate how stability constraints influence optimal operational decisions and the impacts of control and network parameters such as the reactive power droop for grid-forming inverters.
\end{abstract}

\begin{IEEEkeywords}
Optimal power flow, inverter-based resources, decentralized stability, small-signal stability, shadow prices.
\end{IEEEkeywords}

\section{Introduction}

\IEEEPARstart{W}{ith} the increasing integration of inverter-based resources (IBRs), modern power systems are undergoing a fundamental transition from synchronous-machine-dominated systems to power-electronics-dominated systems \cite{dfpscc, geng2022unified}. The dynamic behavior of IBRs is primarily governed by control schemes, and, compared to synchronous machines, they present more stringent operational constraints such as current limitation \cite{rathnayake2026grid}. Future low-inertia IBR-based systems feature higher variability of operating conditions and higher complexity of system dynamics \cite{wu2024approximating, geng2025unified}. Furthermore, with the increasing loading, stability is becoming recognized as an operational concern as it becomes tightly coupled with power dispatch and network constraints.

Optimal power flow (OPF) has long served as a cornerstone of power system operation by enabling economically efficient dispatch while respecting network and device constraints \cite{opf79}. Modern power system operation has already considered various dynamic aspects through ancillary services, such as frequency regulation \cite{frereg} and fast frequency response \cite{frefast}, which explicitly account for system dynamics at dispatch levels \cite{operaifac}. Early work on stability-constrained OPF primarily focused on transient stability. For example, \cite{scopf2000} incorporates dynamic stability constraints by embedding time-domain system dynamics into the OPF formulation through numerical discretization of differential equations. This approach provides a general framework for ensuring transient stability within OPF, but typically requires time-domain simulations and introduces a large number of additional constraints, resulting in significant computational complexity. In contrast, small-signal stability—despite being critical to stable operation—has largely remained an offline assessment, typically verified only after optimal operating points are determined. Although recent studies have attempted to integrate stability considerations into OPF through eigenvalue-based constraints \cite{eig2006, eig2024}, such approaches are often computationally intensive, require a global model, and are difficult to scale. These limitations become particularly pronounced in inverter-based power systems, 
which motivates stability-aware operational frameworks that are both scalable and interpretable.

In this paper, we develop a stability-constrained OPF framework that enables a unified treatment of optimality and small-signal stability for inverter-based power systems.
A key challenge in integrating small-signal stability into OPF lies in identifying sufficient stability conditions that are suitable for optimization formulation. 
Decentralized small-signal stability criteria that algebraically depend on quantities available at individual buses offer a natural pathway in this regard \cite{pesgm,tcns}.
\cite{pesgm} developed such decentralized stability criteria in the form of algebraic inequality constraints involving local voltage magnitudes and network parameters for certifying small-signal stability of an inverter-based power system. These conditions can be evaluated using (steady-state) quantities available at individual buses and their neighbors. As a result, they can be directly embedded into OPF formulations as constraints and avoid explicit reliance on a global model, eigenvalue computation, and dynamic evaluation.




Furthermore, beyond computational advantages, the decentralized stability conditions in \cite{pesgm} can be associated with individual devices and their local interactions. 
In particular, when embedded into OPF, such constraints naturally admit dual interpretations, allowing quantification of the marginal economic impacts of stability requirements through shadow prices. This facilitates a device-level economic assessment of stability provision, revealing how operating decisions of each device contribute to overall system stability and how stability requirements trade off against economic objectives. 

It is worth mentioning that 
a large body of work has developed small-signal stability conditions with decentralization flavors, which, however, present various difficulties in explicit incorporation and interpretation in optimization. Transfer-function-based analysis was used in \cite{huangtsg, huangtps, vinniauto},
which support heterogeneous component models. Nevertheless, they rely on frequency-domain descriptions and require knowledge of the full network model, which makes the resulting conditions difficult to be included as constraints of steady-state operating  variables in OPF. 
Another line of work establishes stability through spectral characterizations of the closed-loop system, where stability is inferred from eigenvalue conditions of the system dynamics \cite{dominic, andysun}. In addition, energy-based and passivity-based approaches provide rigorous stability guarantees using Lyapunov or dissipativity arguments \cite{liutps}, and can accommodate nonlinear and heterogeneous models. However, these methods are formulated in terms of dynamical variables and storage functions, and therefore do not admit explicit algebraic representations in steady-state variables.

In summary, several major contributions are made in this paper as summarized below.
\begin{enumerate}
\item  To the best of our knowledge, this paper presents the first decentralized stability-constrained OPF framework, enabling systematic and rigorous consideration of small-signal stability in operational decision-making for inverter-based power systems.
\item Theoretical characterizations of decentralized stability constraints in OPF are developed through their associated dual variables. A Nodal Stability Shadow Price (NSSP) is proposed for interpreting economic impacts of stability requirements, enabling explicit attribution of stability contributions from each device to the system.
\item We reveal that binding stability constraints can exhibit zero shadow prices under specific conditions, and positive shadow prices can arise when realistic cost considerations for reactive power generation are incorporated into the formulation.
\item The proposed decentralized stability-constrained OPF is demonstrated through benchmark systems, highlighting its practical importance for inverter-based power systems where stability constraints impact optimal operating decisions and lead to stability–economics trade-offs. 
\item Finally, key observations are made on considering the opportunity cost of reactive power for IBRs that have limited capacity, and the important role of the reactive power droop parameter on stability for grid-forming inverters. 
\end{enumerate}

\section{Preliminaries}

\subsection{Network Model and Device Model}

The connected inverter-based electrical network is represented by a graph $\mathcal H=(\mathcal N,\mathcal E)$,
where $\mathcal N$ denotes the set of buses and $\mathcal E$ denotes the set of
transmission lines. Let $\mathcal G\subseteq\mathcal N$ and
$\mathcal L=\mathcal N\setminus\mathcal G$ denote the sets of inverter buses and
load buses, respectively. For each bus $i\in\mathcal N$, let $\mathcal{N}_i$ denote the set of its neighboring buses in the network.

\begin{assumption}
    The electrical network is lossless, i.e., the network admittance
matrix satisfies $Y=G+\mathrm{j}B$ with $G=0$.\label{aslossless}
\end{assumption}

\begin{assumption}
\label{pi/2}
    The steady-state operating points for voltage phase angles satisfy
    \[
    |\theta_i - \theta_j| < \frac{\pi}{2}, \qquad \forall (i,j)\in\mathcal E.
    \]
\end{assumption}

Dynamic states are associated with inverter buses. Next we introduce the dynamic model of the grid-forming inverters.  
All quantities are expressed in per unit unless otherwise specified. 
For each inverter $i\in\mathcal G$, $P_i^0,Q_i^0,V_i^0,\theta_i^0$ denote its nominal setpoints for active power, reactive power, voltage magnitude, and angle, respectively, and $\omega_i^0=1$ represents the nominal frequency.
$\omega_i$ and $V_i$ denote the frequency
and voltage magnitude at inverter bus $i$, respectively. Define the deviations $\Delta\omega_i=\omega_i-\omega_i^0$ and
$\Delta V_i=V_i-V_i^0$. The inverter dynamics are described by
\begin{equation}
    \begin{cases}
    \dot{\theta}_i = \omega_b \Delta\omega_i,\\[2pt]
    \dot{\Delta\omega}_i =
    -\dfrac{1}{\tau^p_i}\Delta\omega_i
    + \dfrac{m^p_i\beta^p_i}{\tau^p_i}\bigl(P_i^0-P_i\bigr),\\[6pt]
    \dot{\Delta V_i} =
    -\dfrac{1}{\tau^q_i}\Delta V_i
    + \dfrac{m^q_i\beta^q_i}{\tau^q_i}\bigl(Q_i^0-Q_i\bigr),
    \end{cases}
    \quad \forall i\in\mathcal G,
    \label{eq:dynamics}
\end{equation}
where $\omega_b$ is the base angular frequency (e.g., $\omega_b = 2\pi \cdot 60$ rad/s).
$P_i$ and $Q_i$
represent the active and reactive power generation. The parameters $m_i^p, 
m_i^q>0$ are the droop coefficients associated with the $P$--$\omega$ and
$Q$--$V$ control channels, respectively. $\tau_i^p, \tau_i^q>0$ are the time constants of the low-pass filters for measured active and reactive powers, respectively, while $\beta_i^p, \beta_i^q>0$
denote the corresponding DC gains of the filters. 

\subsection{Decentralized Small-Signal Stability Criteria}

Small-signal stability is typically characterized by the eigenvalues of the system matrix. 
However, such criteria rely on global system representations and do not yield explicit algebraic constraints. 
To facilitate optimization-based formulations, we focus on (sufficient) stability conditions that admit decentralized algebraic representations in steady-state variables.

In particular, we consider a class of algebraic constraints that depend only on steady-state voltage differences, expressed as,
\begin{equation}
h_\ell(V):= \alpha_\ell^\top V - \Gamma_\ell \le 0, 
\qquad \ell = 1,\dots,L,
\label{eq:stab-nbus-thm}
\end{equation}
where $L$ is the number of constraints. Each vector $\alpha_\ell \in \mathbb{R}^{|\mathcal G|}$ satisfies 
$\alpha_\ell^\top \mathbf{1} = 0$. $\Gamma_\ell > 0$ is a constant determined by system parameters.

As a particular instance of \eqref{eq:stab-nbus-thm}, \cite{pesgm} provides a decentralized voltage-difference condition for the inverter-based power systems through an improved passivity-based argument. The system is small-signal stable
if for each inverter bus $i\in\mathcal G$, the following condition holds,
\begin{equation}
\max_{j\in\mathcal{N}_i^{\mathrm{red}}} (V_j - V_i) 
\le \frac{1}{2 m_i^q \beta_i^q \left|B^{\mathrm{red}}_{ii}\right|},
\label{eq:pesgm_condition}
\end{equation}
where $B^{\mathrm{red}}$ is the susceptance matrix of the Kron reduced network and $\mathcal{N}_i^{\mathrm{red}}$ denotes the set of neighboring buses in the reduced network\footnote{Note that this condition relies on an assumption of negligible transfer conductance in the reduced network, which is commonly adopted for high-voltage transmission systems. For more details, please refer to \cite{pesgm}.}. To derive the condition, the system is modeled as a feedback interconnection, followed by a loop transformation and the use of Gershgorin’s Circle Theorem.

The max constraint in \eqref{eq:pesgm_condition} admits an equivalent representation as a set of split inequalities,
\begin{equation}
   V_j - V_i \le \Gamma_i, \ \forall j \in \mathcal N_i^{\mathrm{red}}. \label{eq:pairwise}
\end{equation}




\section{Decentralized Stability–Constrained OPF}
In this section, we incorporate the voltage-difference stability constraints into an AC-OPF framework in which steady-state operating decisions are optimized subject to both operational constraints and small-signal stability criteria. Stability conditions of the form \eqref{eq:stab-nbus-thm} provide a tractable algebraic representation and explicit dependence on steady-state variables, which enables direct integration into the OPF problem, thereby linking small-signal stability with optimization of steady-state operating points.


We consider an AC-OPF problem with decision variables 
$x:=\big(\{P_{G,i},Q_{G,i}\}_{i\in\mathcal G},\,\{V_i,\theta_i\}_{i\in\mathcal N}\big)$, 
where $P_{G,i}$ and $Q_{G,i}$ denote the active and reactive power generation at bus $i$, and $P_{D,i}$ and $Q_{D,i}$ denote the corresponding load demands. 
For buses $i\in\mathcal N\setminus\mathcal G$ without generation, we set $P_{G,i}=Q_{G,i}=0$. We fix the voltage angle at the reference bus as $\theta_1 = 0$ to eliminate angle invariance.

We consider a general quadratic objective function $J_\mathrm{obj}$ representing generation cost, 
\begin{equation}
J_\mathrm{obj}
=
\sum_{i \in \mathcal G}
\left(
c_i P_{G,i}^2 + b_i P_{G,i} + a_i + d_i Q_{G,i}^2
\right),
\label{eq:general_cost}
\end{equation}
where $a_i, b_i, c_i,$ and $d_i \ge 0$ are cost coefficients. 
In many practical applications, $d_i = 0$ and the objective depends only on active power.

The decentralized stability-constrained OPF problem is formulated in $\mathcal P_1$.
The AC power flow equations, generator operating limits, voltage 
magnitude limits, and branch flow constraints are imposed on the network. 
The stability conditions are defined on the inverter voltage vector $V=(V_i)_{i\in\mathcal G}$ and 
parameterized by the reduced network. 

\begin{align}
(\mathcal P_1)\quad &
\min_{\{P_{G,i},Q_{G,i}\}_{i\in\mathcal G},\,\{V_i,\theta_i\}_{i\in\mathcal N}}
\quad J_\mathrm{obj}
\nonumber\\
\mathrm{s.t.}\quad
&
\theta_1 = 0,\nonumber\\
&
P_{G,i}-P_{D,i}
=
-\sum_{j\in\mathcal N_i}
\,V_iV_jB_{ij}\sin(\theta_i-\theta_j),
\ \  \forall i\in\mathcal N,
\nonumber\\
&
Q_{G,i}-Q_{D,i}
=
V_i^2B_{ii}
+
\sum_{j\in\mathcal N_i}
V_iV_jB_{ij}\cos(\theta_i-\theta_j),\nonumber\\
&
\qquad\qquad\qquad\qquad\qquad\qquad\qquad\qquad\qquad \forall i\in\mathcal N,
\nonumber\\
&
\underline P_{G,i}\le P_{G,i}\le \overline P_{G,i},
\quad \forall i\in\mathcal G,\label{Pg}\\
&
\underline Q_{G,i}\le Q_{G,i}\le \overline Q_{G,i},
\quad \forall i\in\mathcal G,\label{Qg}\\
&
\underline V_i\le V_i\le \overline V_i,
\quad \forall i\in\mathcal N,\label{Vg}\\
&
\big(V_i^2B_{ij}-V_iV_jB_{ij}\cos(\theta_i-\theta_j)\big)^2\nonumber\\
&\ + V^2_iV^2_jB^2_{ij}\sin^2(\theta_i-\theta_j)\le \overline S_{ij}^2,
\quad \forall (i,j)\in\mathcal E,\label{Sij}\\
& 
\alpha_\ell^\top V \le \Gamma_\ell,
\quad \ell=1,\dots,L.\nonumber \ \ (\mathrm{Stability \ Constraints} )
\end{align}


The stability constraints enter the OPF formulation as additional inequality constraints and, thereby, can influence the optimal solution. 
These constraints act on voltage variables and do not explicitly appear in the objective function.
In particular, while the objective is driven by power generation, the stability constraints restrict feasible voltage profiles and thus affect the optimal operating point indirectly through the network equations.
From optimization analysis, each stability constraint is associated with a dual variable (i.e., shadow price), which characterizes the sensitivity of the optimal value to perturbations of the corresponding stability limit $\Gamma_\ell$.

At a KKT point, the multiplier associated with the max constraint \eqref{eq:pesgm_condition} can be expressed as the sum of the multipliers of the active split constraints \eqref{eq:pairwise}, due to the subdifferential structure of the max operator (see Proposition~\ref{prop:max_split_equivalence} in the Appendix for the proof). 
To capture the aggregate effect of these constraints at each bus, we define the Nodal Stability Shadow Price (NSSP) at each inverter bus $i\in\mathcal G$ as,
\begin{equation}
\lambda_i^{\mathrm{stab}} := \sum_{j \in \mathcal N^{\mathrm{red}}_i} \lambda_{ij}^{\mathrm{stab}},\label{eq:NSSP}
\end{equation}
where $\lambda_{ij}^{\mathrm{stab}}$ is the Lagrange multiplier associated with the stability constraint linking buses $i$ and $j$. 
By complementary slackness, only binding constraints contribute to this sum.


\section{Economic Interpretation of Voltage-Difference Stability Constraints}\label{sec:economic}

In this section, we analyze the economic implications of the decentralized voltage-difference stability constraints in the OPF problem. Our goal is to characterize when binding stability constraints carry an economic impact on the cost via shadow prices, and to understand how this depends on the interplay between the objective function and the equality constraints of the OPF problem.

\begin{theorem}[Zero shadow prices when only stability constraints are binding in a lossless system with $P$-only costs]
\label{thm:zero_shadow}
Consider the stability-constrained OPF problem $\mathcal P_1$ for a
lossless network. Assume that the objective function depends only on active power
generation, i.e., $d_i=0$ in \eqref{eq:general_cost}. Let $x^\star$ be a
KKT point of $\mathcal P_1$, and let $\mathcal A := \{\ell : h_\ell(V^\star)=0\}$ denote the active set of stability constraints at $x^\star$.

If all operational inequality constraints (i.e., generator operating limits \eqref{Pg}, \eqref{Qg}, voltage 
magnitude limits \eqref{Vg}, and branch flow constraints \eqref{Sij}) are strictly inactive at $x^\star$, and if the
vectors $\{\alpha_\ell\}_{\ell\in\mathcal A}$ are positively linearly
independent,
then the
Lagrange multipliers associated with all binding stability constraints
satisfy
\[
\mu_\ell^\star = 0, \qquad \forall \ell\in\mathcal A.
\]
\end{theorem}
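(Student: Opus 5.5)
The plan is to write the KKT stationarity conditions of $\mathcal P_1$ at $x^\star$ and use them to show, in turn, that the reactive-balance multipliers vanish, that the active-balance multipliers are all equal, and that the stability multipliers then satisfy a relation that positive linear independence forces to zero. Since every operational inequality (\eqref{Pg}, \eqref{Qg}, \eqref{Vg}, \eqref{Sij}) is strictly inactive, complementary slackness sets all of their multipliers to zero. The Lagrangian therefore reduces to
\[
J_\mathrm{obj} + \sum_{i} \lambda_i^P \bigl(P_{G,i}-P_{D,i}-P_i(V,\theta)\bigr) + \sum_i \lambda_i^Q \bigl(Q_{G,i}-Q_{D,i}-Q_i(V,\theta)\bigr) + \sum_{\ell\in\mathcal A}\mu_\ell\,\alpha_\ell^\top V + \nu\,\theta_1 ,
\]
with $P_i(V,\theta)=-\sum_{j\in\mathcal N_i}V_iV_jB_{ij}\sin(\theta_i-\theta_j)$ and $Q_i(V,\theta)$ as in $\mathcal P_1$.

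\textbf{Step 1 (reactive multipliers).} Because $d_i=0$, the cost does not depend on $Q_{G,i}$. Stationarity with respect to $Q_{G,i}$ then gives $\lambda_i^Q=0$ for every $i\in\mathcal G$. For load buses $Q_{G,i}$ is not a variable, so this argument does not apply there. I would handle them by working on the Kron-reduced network on which the stability condition \eqref{eq:pesgm_condition} is stated, so that every remaining bus is an inverter bus; loads are absorbed either into the reduced susceptances or as constant-power injections at inverter buses. \emph{I expect this to be the main obstacle}: non-eliminated load buses carry $\lambda_i^Q$ that need not vanish, and the statement has to be read, or its proof restricted, to the reduced model.

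\textbf{Step 2 (active multipliers are equal).} With $\lambda^Q=0$, stationarity with respect to $\theta_k$ for $k\neq 1$ reads
\[
\sum_{j\in\mathcal N_k}(\lambda_k^P-\lambda_j^P)\,V_kV_jB_{kj}\cos(\theta_k-\theta_j)=0 .
\]
The weights $V_kV_j|B_{kj}|\cos(\theta_k-\theta_j)$ are strictly positive by Assumption~\ref{pi/2} and positive voltages, so this is a weighted Laplacian of the connected graph acting on $\lambda^P$. The equation at $k=1$ carries the extra multiplier $\nu$. However, summing all rows of a Laplacian gives zero, which forces $\nu=0$, and then $\lambda^P$ lies in the Laplacian's kernel. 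Hence $\lambda^P=c\mathbf 1$ for some scalar $c$.

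\textbf{Step 3 (voltage stationarity).} The active-balance terms contribute $-c\sum_i P_i(V,\theta)$ to the Lagrangian. In a lossless network (Assumption~\ref{aslossless}) this sum vanishes identically in $(V,\theta)$, because the line terms are antisymmetric in $(i,j)$. Its gradient with respect to $V$ is therefore zero. The $V$-terms of $J_\mathrm{obj}$ and of the $Q$-equations are already absent by the choice of cost and by Step~1. Stationarity with respect to the inverter voltages thus collapses to
\[
\sum_{\ell\in\mathcal A}\mu_\ell\,\alpha_\ell=0, \qquad \mu_\ell\ge 0 .
\]
Positive linear independence of $\{\alpha_\ell\}_{\ell\in\mathcal A}$ means exactly that no nontrivial nonnegative combination of them vanishes, so $\mu_\ell^\star=0$ for all $\ell\in\mathcal A$.

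In short, Steps 2 and 3 are short computations; the care goes into Step 1, specifically into justifying that no reactive-balance multiplier survives at a bus whose voltage enters the stability constraints.
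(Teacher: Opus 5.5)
Your outline matches the paper's proof. The paper also derives $\lambda_Q=0$ from stationarity in $Q_G$ and reads $\theta$-stationarity as a positively weighted Laplacian system. It uses only the non-reference rows, whose kernel is already $\mathrm{span}\{\mathbf 1\}$; your $\nu$-plus-row-sum variant is equivalent. It then concludes $\lambda_P=c\mathbf 1$, removes $\nabla_V(c\,\mathbf 1^\top P^{\mathrm{net}})$ via the lossless identity $\sum_i P_i^{\mathrm{net}}\equiv 0$, and finishes with positive linear independence.

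The obstacle you flag in Step 1 is genuine, and the paper does not address it. It asserts $\lambda_Q=0$ at every bus, although $Q_{G,i}$ is a variable only for $i\in\mathcal G$. At a load bus $k$, the multiplier $\lambda_{Q,k}$ enters both the $\theta$-stationarity and the $V_{\mathcal G}$-stationarity conditions. So your argument is exactly as complete as the paper's, but your remedy does not close the hole:
\begin{itemize}
\item Absorbing loads into the Kron-reduced network turns $P_{D,k}$ into conductances; only $Q_{D,k}$ can go into susceptances. The reduced model is then lossy, so the cancellation of $\sum_i P_i^{\mathrm{net}}$ in your Step 3 is lost.
\item Relocating constant-power loads to inverter buses is not an exact reduction, so it changes $\mathcal P_1$ rather than proving the theorem for it.
\item In any reduced model, Assumption~2 would have to be re-imposed on the (typically complete) reduced graph.
\end{itemize}

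The missing idea is to use $\theta$-stationarity together with $V_k$-stationarity for $k\in\mathcal L$, staying inside $\mathcal P_1$. Neither $J_\mathrm{obj}$ nor any stability constraint involves $\theta$ or $V_{\mathcal L}$. So, using $\lambda_{Q,\mathcal G}=0$, these conditions say that $(\lambda_P,\lambda_{Q,\mathcal L})$ is a left null vector of $\partial(P^{\mathrm{net}},Q^{\mathrm{net}}_{\mathcal L})/\partial(\theta,V_{\mathcal L})$. The $\theta_1$ condition only forces $\nu=0$, as in your Step 2.

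This matrix always has $(\mathbf 1,0)$ in its left kernel. Deleting its dependent $\theta_1$ column and its dependent $P_1$ row leaves the standard load-flow Jacobian, with the reference bus as slack, inverter buses as PV buses and load buses as PQ buses. If that Jacobian is nonsingular at $x^\star$, the left kernel is exactly $\mathrm{span}\{(\mathbf 1,0)\}$. This yields $\lambda_{Q,\mathcal L}=0$ and $\lambda_P=c\mathbf 1$ at once, and your Step 3 then goes through verbatim. For $\mathcal L=\emptyset$ that Jacobian is the grounded weighted Laplacian, nonsingular by Assumption~2 and connectivity, which is precisely your Step 2.

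With load buses, however, nonsingularity is an extra hypothesis. At voltage-collapse points a second left null vector appears, necessarily with $\lambda_{Q,\mathcal L}\neq 0$, and then nothing forces $\sum_\ell\mu_\ell\alpha_\ell=0$. A complete proof for networks with load buses therefore needs this assumption, or the assumption that every bus has a free reactive injection.
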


\noindent\textit{Proof:}
For notational convenience, define power injections at bus $i$ to the network as,
\[
P_i^{\mathrm{net}}(V,\theta)
:= -\sum_{j\in\mathcal N_i} V_iV_jB_{ij}\sin(\theta_i-\theta_j),
\]
\[
Q_i^{\mathrm{net}}(V,\theta)
:= V_i^2B_{ii}+\sum_{j\in\mathcal N_i}V_iV_jB_{ij}\cos(\theta_i-\theta_j).
\]

Let $\lambda_P,\lambda_Q\in\mathbb{R}^{|\mathcal N|}$ be the multipliers
associated with the active and reactive power balance constraints,
respectively, and let $\mu_\ell\ge 0$ be the multiplier associated with
the stability constraint $h_\ell(V)\le 0$, $\ell=1,\dots,L$. Since all
operational inequality constraints \eqref{Pg}, \eqref{Qg}, \eqref{Vg} and \eqref{Sij} are inactive at $x^\star$, their
multipliers are zero.

The Lagrangian is
\begin{align}
\mathcal L
=
&J_\mathrm{obj}
+\lambda_P^\top(P_G-P_D-P^{\mathrm{net}})\nonumber\\
&+\lambda_Q^\top(Q_G-Q_D-Q^{\mathrm{net}})
+\sum_{\ell=1}^L \mu_\ell(\alpha_\ell^\top V-\Gamma_\ell).\nonumber
\end{align}

Since $J_\mathrm{obj}$ depends only on $P_G$, stationarity with respect
to $Q_G$ gives
\[
\lambda_Q=0.
\]

Next, for each non-reference bus $i\in\mathcal N\setminus\{1\}$,
stationarity with respect to $\theta_i$ implies
\[
\frac{\partial \mathcal L}{\partial \theta_i}
=
-\sum_{j\in\mathcal N_i}(\lambda_{P,i}-\lambda_{P,j})V_iV_jB_{ij}
\cos(\theta_i-\theta_j)=0.
\]

Since the network is connected and Assumption~\ref{pi/2}
ensures that $|\theta_i-\theta_j|<\pi/2$ for all $(i,j)\in\mathcal E$,
we have $\cos(\theta_i-\theta_j)>0$ on every line. The above equations
therefore define a weighted graph Laplacian system for $\lambda_P$ whose
null space is $\mathrm{span}\{\mathbf 1\}$. Hence, all components of
$\lambda_P$ are identical, and there exists a scalar $\lambda$ such that
\[
\lambda_P=\lambda\mathbf 1.
\]

Stationarity with respect to $V$ yields
\[
-\nabla_V(\lambda_P^\top P^{\mathrm{net}})
-\nabla_V(\lambda_Q^\top Q^{\mathrm{net}})
+\sum_{\ell=1}^L \mu_\ell \alpha_\ell
=0.
\]

Because $\lambda_Q=0$, this reduces to
\[
-\nabla_V(\lambda_P^\top P^{\mathrm{net}})
+\sum_{\ell=1}^L \mu_\ell \alpha_\ell
=0.
\]

Using $\lambda_P=\lambda\mathbf 1$, we have
\[
\lambda_P^\top P^{\mathrm{net}}
=
\lambda\,\mathbf 1^\top P^{\mathrm{net}}
=
\lambda\sum_{i\in\mathcal N}P_i^{\mathrm{net}}.
\]

For a lossless network,
\[
\sum_{i\in\mathcal N}P_i^{\mathrm{net}}=0,
\]
since each line contribution cancels pairwise, that is,
$
-\,V_iV_jB_{ij}\sin(\theta_i-\theta_j)$ and
$-\,V_jV_iB_{ji}\sin(\theta_j-\theta_i)$
sum to zero.

Hence, $\lambda_P^\top P^{\mathrm{net}}\equiv 0$, and
\[
\nabla_V(\lambda_P^\top P^{\mathrm{net}})=0.
\]

Therefore,
\[
\sum_{\ell=1}^L \mu_\ell \alpha_\ell=0.
\]

By complementary slackness, $\mu_\ell=0$ for all
$\ell\notin\mathcal A$. Thus,
\[
\sum_{\ell\in\mathcal A}\mu_\ell\alpha_\ell=0,
\qquad
\mu_\ell\ge 0,\ \forall \ell\in\mathcal A.
\]
Since the vectors $\{\alpha_\ell\}_{\ell\in\mathcal A}$ are positively
linearly independent, it follows that
\[
\mu_\ell=0,\qquad \forall \ell\in\mathcal{A}. \hspace{5.5cm} \blacksquare
\]

\begin{remark}
Theorem~\ref{thm:zero_shadow} establishes a fundamental property of
voltage-difference stability constraints in lossless OPF problems with
$P$-only objectives. Specifically, even when one or more stability
constraints are active at an optimal solution, their associated shadow
prices are necessarily zero if all other operational inequality
constraints are inactive. That is, enforcing stability does not change the optimal value, and the marginal cost of stability is zero. 
\end{remark}

Under the conditions in Theorem~\ref{thm:zero_shadow}, voltage-difference
stability constraints do not introduce any economic trade-off with the
OPF objective. 
However, this reflects a limitation of the formulation rather than a realistic operational feature. In practice, the limited current capability of IBRs couples active and reactive power, so that adjustments in reactive power inevitably impact active power dispatch and should incur an opportunity cost. 

A natural question arises: under what conditions can an (independently) binding stability constraint induce a positive shadow price? We next characterize the structural requirements for this to occur.

\begin{proposition}[Necessary condition for positive shadow prices of stability constraints in OPF]
\label{thm:necessary}
Under the setting of $\mathcal P_1$, let $J_\mathrm{obj}(x;\rho)$
denote an objective parameterized by $\rho$. We consider the case where a single stability constraint is independently binding. Let $s \in \{1,\dots,L\}$ denote its index. Suppose that $x^\star$
is a KKT point 
satisfying
\[
g(x^\star)=0,\qquad
h_s(x^\star)=0,\qquad
h_\ell(x^\star)<0,\ \forall \ell\neq s,
\]
where $g(x)=0$ denotes the set of equality constraints of $\mathcal P_1$, and assume that $\nabla_x g(x^\star)$ has full row rank.

If the stability constraint $h_s$ is independently binding at $x^\star$
with a strictly positive multiplier $\mu_s^\star>0$, then for any local parametrization $x=\Phi(z)$ of the manifold $\{x:g(x)=0\}$ near $x^\star$, where $z$ denotes a set of local free variables, the reduced
stationarity condition
\[
\nabla_z J_\mathrm{obj}(x^\star;\rho)
+
\mu_s^\star\,\nabla_z h_s(x^\star)
=0
\]
must hold.
\end{proposition}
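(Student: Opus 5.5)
The plan is to obtain the reduced stationarity condition from the full KKT system of $\mathcal P_1$ by eliminating the equality multipliers with a tangent-space argument. As in the proof of Theorem~\ref{thm:zero_shadow}, the operational inequality constraints \eqref{Pg}--\eqref{Sij} are treated as strictly inactive at $x^\star$, so their multipliers vanish. This reading is implicit in the phrase ``a single stability constraint is independently binding.'' By complementary slackness, $h_\ell(x^\star)<0$ for $\ell\neq s$ gives $\mu_\ell^\star=0$.

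With these multipliers removed, the KKT stationarity condition at $x^\star$ reads
\[
\nabla_x J_\mathrm{obj}(x^\star;\rho)+\nabla_x g(x^\star)^\top\lambda^\star+\mu_s^\star\nabla_x h_s(x^\star)=0
\]
for some equality multiplier $\lambda^\star$. Because $\nabla_x g(x^\star)$ has full row rank, the implicit function theorem guarantees that $\{x:g(x)=0\}$ is a smooth manifold near $x^\star$. It therefore admits a local parametrization $x=\Phi(z)$ with $\Phi(z^\star)=x^\star$ and Jacobian $D\Phi(z^\star)$ of full column rank. Its range equals the tangent space $\ker\nabla_x g(x^\star)$.

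Next, differentiate the identity $g(\Phi(z))\equiv 0$. This gives $\nabla_x g(x^\star)\,D\Phi(z^\star)=0$. Multiplying the full stationarity condition on the left by $D\Phi(z^\star)^\top$ therefore annihilates the term containing $\lambda^\star$. By the chain rule, $D\Phi(z^\star)^\top\nabla_x J_\mathrm{obj}=\nabla_z (J_\mathrm{obj}\circ\Phi)(z^\star)$ and $D\Phi(z^\star)^\top\nabla_x h_s=\nabla_z (h_s\circ\Phi)(z^\star)$. These are the quantities denoted $\nabla_z J_\mathrm{obj}$ and $\nabla_z h_s$, so the displayed reduced condition follows.

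Finally, I would note how the hypothesis $\mu_s^\star>0$ enters. It is not needed to derive the identity, but it makes the conclusion informative. Since $\mu_s^\star>0$, the condition forces $\nabla_z J_\mathrm{obj}(x^\star;\rho)=-\mu_s^\star\nabla_z h_s(x^\star)$, and this vector is nonzero whenever $\nabla_z h_s(x^\star)\neq 0$. Hence the reduced objective gradient must be a nonzero negative multiple of the reduced constraint gradient. This is exactly what Theorem~\ref{thm:zero_shadow} rules out for $P$-only costs in lossless networks: there the reduced objective gradient along voltage directions vanishes.

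The main obstacle is justifying that the operational inequality multipliers are zero. This is not stated explicitly, so I would make it an explicit reading of ``independently binding.'' A second point to settle is the independence of the conclusion from the choice of $\Phi$. This follows because any two parametrizations differ by an invertible reparametrization of $z$, under which both reduced gradients transform by the same invertible matrix.
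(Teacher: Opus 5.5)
Your proposal is correct and follows the paper's own argument. You write KKT stationarity with only the equality constraints and the single active stability constraint, then multiply by $D\Phi(z^\star)^\top$, whose range is the tangent space $\ker\nabla_x g(x^\star)$; this eliminates $\lambda^\star$, and the chain rule gives the reduced gradients. Your explicit note that the operational inequality multipliers must vanish states precisely an assumption the paper makes only implicitly, by leaving those terms out of its Lagrangian.
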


\noindent\textit{Proof:}
The Lagrangian is
\[
\mathcal L
=
J_\mathrm{obj}(x;\rho)
+
\lambda^\top g(x)
+
\mu_s\,h_s(x).
\]

Since $x^\star$ is a KKT point, the stationarity condition at $x^\star$ gives
\[
\nabla_x J_\mathrm{obj}(x^\star;\rho)
+
\nabla_x g(x^\star)^\top \lambda^\star
+
\mu_s^\star \nabla_x h_s(x^\star)
=0.
\]

Since $\nabla_x g(x^\star)$
has full row rank, by the implicit function theorem, the feasible set
$\{x:g(x)=0\}$ can be locally parameterized by free variables $z$,
i.e., $x=\Phi(z)$.

By the chain rule, the reduced gradients are given by
\[
\nabla_z (\cdot)\big|_{z^\star}
=
J_\Phi(z^\star)^\top \nabla_x (\cdot)\big|_{x^\star}.
\]
where $J_\Phi(z^\star)$ is the Jacobian of $\Phi$ at $z^\star$.


\textcolor{black}{Restricting the stationarity condition to the tangent space of the feasible manifold 
$\{x : g(x)=0\}$ amounts to considering feasible directions $d$ satisfying 
$\nabla g(x^\star)\, d = 0$, which eliminates the term 
$\nabla_x g(x^\star)^\top \lambda^\star$. Hence, we obtain $d^\top \left(
\nabla_x J_\mathrm{obj}(x^\star;\rho)
+
\mu_s^\star \nabla_x h_s(x^\star)
\right)
= 0$, $\forall d \text{ with } \nabla g(x^\star)\, d = 0$. Using the local parametrization $x=\Phi(z)$, whose Jacobian spans the tangent space, this is equivalent to the reduced stationarity condition
\[
\nabla_z J_\mathrm{obj}(x^\star;\rho)
+
\mu_s^\star \nabla_z h_s(x^\star)
=0.\hspace{3.5cm} \blacksquare
\]}

Proposition~\ref{thm:necessary} characterizes when a stability constraint can induce a positive shadow price in the OPF problem. The result follows from the KKT stationarity condition, though is expressed in terms of the reduced optimality condition on the equality-constrained feasible set to highlight {how the stability constraint contributes to the reduced optimality condition and hence to the shadow price}. In the single-binding case, a necessary condition for a positive multiplier is that the reduced objective gradient is negatively aligned with the gradient of the active stability constraint. This means that the stability constraint must actively participate in the optimality condition, rather than being redundant with respect to the objective. Furthermore, since the reduced objective gradient depends on the parameterization of the objective function, this necessary condition shows that the stability constraint cannot carry a positive shadow price if this condition is not satisfied.

\begin{proposition}[Sufficient condition for positive shadow prices of stability constraints in OPF]
\label{thm:sufficient}
Under the same setting as Proposition~\ref{thm:necessary}, consider a feasible point $x$ at which a single stability constraint $h_s$ is active, i.e., $g(x)=0$,
$h_s(x)=0$,
$h_\ell(x)<0$, $\forall \ell\neq s$, where $g(x)=0$ denotes the equality constraints of $\mathcal P_1$. Suppose there
exist a parameter $\rho$ and a scalar $\mu_s>0$ such that
\[
\nabla_z J_\mathrm{obj}(x;\rho)
+
\mu_s\,\nabla_z h_s(x)
=0.
\]

If, in addition, the second-order sufficient condition (SOSC) holds for
the reduced problem, then $x$ is a strict local minimizer of
$\mathcal P_1$, and the constraint $h_s$ is independently binding at
$x$ with positive shadow price $\mu_s>0$.
\end{proposition}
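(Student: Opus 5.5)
We would prove Proposition~\ref{thm:sufficient} by reversing the argument of Proposition~\ref{thm:necessary}: lift the reduced stationarity condition back to a full KKT point of $\mathcal P_1$, invoke the standard second-order sufficient condition, and read off the multiplier.

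\emph{Step 1: reduce locally.} Since $\nabla_x g(x)$ has full row rank, the implicit function theorem gives a local parametrization $x=\Phi(z)$ of $\{x:g(x)=0\}$ near $x$. The stability constraints $h_\ell$ with $\ell\neq s$ are strictly negative at $x$, so by continuity they stay inactive in a neighborhood. The operational limits \eqref{Pg}--\eqref{Sij} are implicitly taken as inactive, as in Theorem~\ref{thm:zero_shadow}; we would state this explicitly. Locally, $\mathcal P_1$ is then equivalent to the reduced problem
\[
\min_z\ \tilde J(z):=J_\mathrm{obj}(\Phi(z);\rho)\quad \text{s.t.}\quad \tilde h_s(z):=h_s(\Phi(z))\le 0 .
\]

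\emph{Step 2: construct full KKT multipliers.} Let $w:=\nabla_x J_\mathrm{obj}(x;\rho)+\mu_s\nabla_x h_s(x)$. The hypothesis says $J_\Phi^\top w=0$. Because the columns of $J_\Phi$ span $\ker\nabla_x g(x)$, this means $w\in(\ker\nabla_x g)^\perp=\mathrm{range}(\nabla_x g^\top)$. Hence there exists $\lambda$, unique by full row rank, with $w+\nabla_x g(x)^\top\lambda=0$. Together with $h_s(x)=0$, $\mu_s>0$, and zero multipliers on all inactive constraints, this gives a KKT point of $\mathcal P_1$ with strict complementarity on $h_s$.

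\emph{Step 3: apply second-order sufficiency.} We would take the reduced SOSC to mean
\[
d^\top\nabla_z^2\big(\tilde J+\mu_s\tilde h_s\big)d>0 \quad \text{for all } d\neq0 \text{ with } \nabla_z\tilde h_s\,d=0 .
\]
This critical cone uses an equality because of strict complementarity. The standard SOSC theorem then shows that $z$ is a strict local minimizer of the reduced problem, and therefore $x=\Phi(z)$ is a strict local minimizer of $\mathcal P_1$, since $\Phi$ is a local diffeomorphism onto the feasible manifold.

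\emph{Step 4: interpret the multiplier.} Note that $\nabla_z\tilde h_s\neq0$, since otherwise the hypothesis would force $\nabla_z\tilde J=0$. So LICQ holds for the reduced problem and the multiplier $\mu_s$ is unique. With SOSC and strict complementarity, the sensitivity theorem (Fiacco) gives a $C^1$ local solution path under perturbation $\Gamma_s\mapsto\Gamma_s+\epsilon$, with $\partial J^\star/\partial\epsilon=-\mu_s<0$. Thus $h_s$ is independently binding with positive shadow price $\mu_s$.

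The main obstacle is keeping the correspondence between reduced and full second-order conditions rigorous. The reduced Hessian of $\tilde J$ contains curvature terms $\sum_k(\nabla_x J)_k\nabla^2\Phi_k$, which must match the full Lagrangian Hessian, including the $\lambda^\top\nabla^2 g$ term, restricted to the critical cone. We would sidestep this by working entirely with the reduced problem, where SOSC is assumed directly, and only transfer local optimality through $\Phi$.
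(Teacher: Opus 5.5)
Your proof is correct in its core and follows the paper's route. Steps 1 and 3 are exactly the paper's argument:
\begin{itemize}
\item parametrize $\{x:g(x)=0\}$ by $x=\Phi(z)$;
\item use strict inactivity of all other inequalities to reduce $\mathcal P_1$ locally to $\min_z J_\mathrm{obj}(\Phi(z);\rho)$ subject to $h_s(\Phi(z))\le 0$;
\item invoke the reduced SOSC to get a strict local minimizer $z$;
\item transfer it to $\mathcal P_1$ through $\Phi$.
\end{itemize}
Your Step 2 goes beyond the paper and is valid. Lifting the reduced stationarity to full multipliers via $w\in(\ker\nabla_x g)^\perp=\mathrm{range}(\nabla_x g^\top)$ shows that $\mu_s$ is an actual KKT multiplier of $\mathcal P_1$ with strict complementarity. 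That is all the paper means by ``positive shadow price,'' and the paper simply asserts it.

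The one flawed step is in Step 4. You claim $\nabla_z\tilde h_s\neq 0$ ``since otherwise the hypothesis would force $\nabla_z\tilde J=0$.'' This is a non-sequitur, because $\nabla_z\tilde J=0$ contradicts nothing in the hypotheses. If $\nabla_z\tilde h_s=0$, the stationarity equation holds for every $\mu_s$, so the multiplier is not unique. In that case Fiacco's sensitivity theorem, which requires LICQ, does not apply, and $-\mu_s$ need not be the derivative of the optimal value with respect to $\Gamma_s$.

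You have two ways to repair this.
\begin{itemize}
\item Drop Step 4. The statement, as the paper proves it, needs only what Steps 1--3 deliver.
\item Add LICQ for $\{g,h_s\}$ at $x$ as an explicit assumption, equivalently $\nabla_x h_s(x)\notin\mathrm{range}(\nabla_x g(x)^\top)$.
\end{itemize}
LICQ holds, for instance, when the load-bus power-flow Jacobian with respect to $(V_{\mathcal L},\theta_{\mathcal L})$ is nonsingular. Then the inverter voltage magnitudes can serve as free variables in $z$, and $\nabla_z\tilde h_s$ contains $\alpha_s$. This vector is nonzero, because $h_s(x)=0$ with $\Gamma_s>0$.
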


\noindent\textit{Proof:}
Since $\nabla_x g(x)$ has full row rank, the equality-constrained feasible set $\{x:g(x)=0\}$ can be locally parameterized as $x=\Phi(z)$ around $x$, where $z$ denotes a set of local free variables.

Since all other inequality constraints are strictly inactive at
$x$, the OPF problem locally reduces to
\[
\min_z\ J_\mathrm{obj}(\Phi(z);\rho)
\quad \mathrm{s.t.}\quad
h_s(\Phi(z))\le 0.
\]

By assumption, the reduced first-order stationarity condition holds with
$\mu_s>0$, and the reduced SOSC ensures that $z$ is a strict
local minimizer of the reduced problem.

Therefore, $x=\Phi(z)$ is a strict local minimizer of
$\mathcal P_1$. Since $h_s(x)=0$ and $\mu_s>0$, the
constraint $h_s$ is independently binding at $x$ with positive
shadow price.
\hfill $\blacksquare$

Proposition~\ref{thm:sufficient} provides a constructive counterpart to Proposition~\ref{thm:necessary}, showing that positive shadow prices arise when this sufficient condition is satisfied. Together with Theorem~\ref{thm:zero_shadow}, these results rely on the existence of an operating point where a stability constraint is independently binding. In practice, identifying such points is nontrivial, especially in high-dimensional OPF problems where verifying SOSC is difficult. A useful perspective is provided by the inward feasible descent principle, which states that a constraint affects optimality only if it eliminates feasible descent directions. Following this insight, we can construct such operating points by restricting variables to eliminate descent directions, thereby obtaining locally optimal points with positive shadow prices.

\begin{theorem}[Positive shadow price of stability constraint with reactive power costs]
\label{thm:qcost}
Under the same setting as Proposition~\ref{thm:sufficient}, consider the
quadratic objective function \eqref{eq:general_cost} in OPF, and define coefficients vector
\[
\rho := (b,c,d)\in\mathbb{R}^{3|\mathcal G|}.
\]

If there exist coefficients $\rho$ with at least one $d_i>0$ and a scalar
$\mu_s^\star>0$ such that
\begin{equation}
A(x^\star)\,\rho+\mu_s^\star\,\nabla_z h_s(x^\star)=0,
\label{eq:qcost_realization}
\end{equation}
where $A(x^\star)$ denotes the matrix mapping the objective parameters
$\rho$ to the reduced gradient of $J_\mathrm{obj}$ on the
equality-constrained feasible set, i.e.,
\[
\nabla_z J_\mathrm{obj}(x^\star;\rho)=A(x^\star)\,\rho,
\]
and if the reduced SOSC in
Proposition~\ref{thm:sufficient} holds, then $x^\star$ is a strict local
minimizer of $\mathcal P_1$, and the stability constraint $h_s$ is
independently binding at $x^\star$ with positive shadow price
$\mu_s^\star>0$.
\end{theorem}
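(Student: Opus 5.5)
The proof reduces Theorem~\ref{thm:qcost} to Proposition~\ref{thm:sufficient}. The only new content is that the reduced gradient of the quadratic cost \eqref{eq:general_cost} is linear in the coefficient vector $\rho=(b,c,d)$. This lets the abstract reduced stationarity condition be read as the linear system \eqref{eq:qcost_realization}.

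\textbf{Step 1: linearity of the reduced gradient in $\rho$.}
I will write $J_\mathrm{obj}(x;\rho)=\sum_{i\in\mathcal G}\bigl(c_iP_{G,i}^2+b_iP_{G,i}+d_iQ_{G,i}^2\bigr)+\sum_i a_i$. The constant $a_i$ does not affect any gradient. Hence
\[
\nabla_xJ_\mathrm{obj}=\sum_{i\in\mathcal G}\bigl((2c_iP_{G,i}+b_i)e_{P_i}+2d_iQ_{G,i}e_{Q_i}\bigr).
\]
This is linear in $\rho$, with coefficients that depend only on $x^\star$. Composing with $x=\Phi(z)$ and applying the chain rule gives $\nabla_zJ_\mathrm{obj}=J_\Phi(z^\star)^\top\nabla_xJ_\mathrm{obj}(x^\star;\rho)$. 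I will define $A(x^\star)$ explicitly: its columns are $J_\Phi^\top e_{P_i}$ for $b_i$, $2P^\star_{G,i}J_\Phi^\top e_{P_i}$ for $c_i$, and $2Q^\star_{G,i}J_\Phi^\top e_{Q_i}$ for $d_i$. This shows that the matrix in the statement is well defined. The full-row-rank assumption on $\nabla_xg(x^\star)$, inherited from Proposition~\ref{thm:necessary}, guarantees that $\Phi$ exists through the implicit function theorem.

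\textbf{Step 2: invoking Proposition~\ref{thm:sufficient}.}
With this identification, \eqref{eq:qcost_realization} is exactly $\nabla_zJ_\mathrm{obj}(x^\star;\rho)+\mu_s^\star\nabla_zh_s(x^\star)=0$ with $\mu_s^\star>0$. The remaining hypotheses are assumed in the statement: feasibility, $h_s$ active, all other inequalities strictly inactive, and the reduced SOSC. Proposition~\ref{thm:sufficient} therefore yields that $x^\star$ is a strict local minimizer of $\mathcal P_1$ and that $h_s$ is independently binding with shadow price $\mu_s^\star>0$.

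\textbf{Step 3: the role of $d_i>0$.}
For completeness, I will remark on why the requirement that some $d_i>0$ is not vacuous, using Theorem~\ref{thm:zero_shadow}. If $d=0$, then Theorem~\ref{thm:zero_shadow} implies $\mu_s=0$, since a single vector $\alpha_s\neq0$ is trivially positively linearly independent. So no realization with $\mu_s^\star>0$ exists. Equivalently, the columns of $A$ associated with $(b,c)$ cannot produce $-\mu_s^\star\nabla_zh_s$, and the $Q$-cost columns are what make \eqref{eq:qcost_realization} solvable.

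The main obstacle is bookkeeping rather than depth. One must check that the hypotheses of Proposition~\ref{thm:sufficient} are literally met. In particular, the reduced SOSC depends on $\rho$ and must be evaluated at the same $\rho$ that solves \eqref{eq:qcost_realization}. One must also check that the multiplier $\mu_s^\star$ obtained in the reduced problem coincides with the shadow price in $\mathcal P_1$. That coincidence follows because the full KKT multipliers $\lambda^\star$ for $g$ are recovered uniquely from the full-rank $\nabla_xg$.
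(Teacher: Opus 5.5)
Your proposal is correct and follows essentially the same route as the paper: the paper writes $\nabla_{(P_G,Q_G)}J_\mathrm{obj}=D(x^\star)\rho$, identifies $A(x^\star)=J_\Phi(z^\star)^\top D(x^\star)$ by the chain rule (your column-by-column construction gives the same matrix), and then invokes Proposition~\ref{thm:sufficient} directly. Your Step~3, which uses Theorem~\ref{thm:zero_shadow} to rule out a realization with $d=0$, is not part of the paper's proof; given that theorem it is a valid aside, and it matches the discussion the paper places after Theorem~\ref{thm:qcost}.
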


\noindent\textit{Proof:}
Under the quadratic objective \eqref{eq:general_cost}, the objective
gradient depends only on the coefficients $(b,c,d)$, since the constants
$a_i$ vanish under differentiation. In particular, at $x^\star$ we have
\[
\nabla_{(P_G,Q_G)} J_\mathrm{obj}(x^\star;\rho)
=
D(x^\star)\,\rho,
\]
where
\[
D(x^\star)=
\begin{pmatrix}
I & 2\,\mathrm{diag}(P_G^\star) & 0\\
0 & 0 & 2\,\mathrm{diag}(Q_G^\star)
\end{pmatrix}.
\]

Let $x=\Phi(z)$ be a local parametrization of the equality-constrained
feasible set $\{x:g(x)=0\}$ around $x^\star$, where $z$ denotes a set of local free variables, and let
\[
J_\Phi(z^\star):=\frac{\partial \Phi}{\partial z}(z^\star)
\]
denote the Jacobian of $\Phi$ at $z^\star$. By the chain rule, the
reduced gradient of the objective is
\[
\nabla_z J_\mathrm{obj}(x^\star;\rho)
=
J_\Phi(z^\star)^\top D(x^\star)\,\rho.
\]
Hence,
\[
A(x^\star):=J_\Phi(z^\star)^\top D(x^\star)
\]
is exactly the matrix mapping $\rho$ to the reduced gradient of
$J_\mathrm{obj}$, so that
\[
\nabla_z J_\mathrm{obj}(x^\star;\rho)=A(x^\star)\,\rho.
\]

Therefore, condition \eqref{eq:qcost_realization} coincides with the
reduced first-order stationarity condition
\[
\nabla_z J_\mathrm{obj}(x^\star;\rho)
+\mu_s^\star\,\nabla_z h_s(x^\star)=0
\]
required in Proposition~\ref{thm:sufficient}. Since, in addition, the reduced
SOSC holds, Proposition~\ref{thm:sufficient}
implies that $x^\star$ is a strict local minimizer of $\mathcal P_1$ and
that the stability constraint $h_s$ is independently binding at $x^\star$
with positive shadow price $\mu_s^\star>0$.

\hfill $\blacksquare$

Theorem~\ref{thm:qcost} shows that incorporating reactive-power costs
expands the set of achievable (reduced) objective gradients through the
parameter $\rho$. In contrast to $P$-only objectives, where the reduced
gradient is structurally restricted and may vanish along feasible
directions, the inclusion of reactive-power terms introduces additional
degrees of freedom that allow alignment between the reduced objective
gradient and the stability constraint gradient. As a result, the stationarity condition required for a positive shadow
price can be satisfied, making independently binding stability constraints
with nonzero economic value attainable. The practical importance of the theoretical results proved in this section will be discussed in Section~\ref{sec:numerical}.

\section{Numerical Studies}\label{sec:numerical}

\subsection{Quality of the Decentralized Stability Criteria}

To evaluate the quality of the decentralized stability criteria, we compare it against the exact small-signal stability criteria based on the eigenvalues of the linearized system matrix. Since the decentralized condition is designed as a tractable sufficient condition, the main question is how conservative it is relative to the eigenvalue-based benchmark.

For this purpose, we consider a grid of operating points, and define the \emph{gap ratio} as the fraction of operating points that are certified stable by the eigenvalue criterion but rejected by the decentralized stability criterion,
\[
\xi
:=
\frac{
\card\!\left(\mathcal{S}_{\mathrm{eig}} \setminus \mathcal{S}_{\mathrm{dec}}\right)
}{
\card\!\left(\mathcal{S}_{\mathrm{eig}}\right)
},
\]
where $\mathcal{S}_{\mathrm{eig}}$ denotes the set of operating points for which the reduced linearized system is small-signal stable, i.e., all eigenvalues except the trivial zero mode have strictly negative real parts, and $\mathcal{S}_{\mathrm{dec}}$ denotes the set of operating points that satisfy the decentralized stability condition. 
A smaller value of $\xi$ indicates a less conservative decentralized criterion.

We perform the test on the two-bus system by fixing the active-power loop parameters and varying the reactive droop coefficients as well as the network susceptance parameter. All parameters are in p.u. unless otherwise specified. Specifically, the active-power loop parameters are fixed at
$m^p = 6$, $\beta_p = 1$, $\tau_p = 0.1$,
and the remaining reactive-loop parameters are fixed to $\beta_q = 1$, $\tau_q = 0.1$. The scanned parameters are
$B \in \{2,4,6,8\}$ p.u., and the grid for $m_1^q$ and $m_2^q$ contains $9$ uniformly spaced points in $[1,5]$. For each parameter tuple $(B,m_1^q,m_2^q)$, we further sample the operating-point space over $V_1,V_2 \in [0.95,1.05]$, $\theta_2 \in [-0.525,\,0.525]$, using $31$ grid points for $V_1$ and $V_2$, and $61$ grid points for $\theta_2$. Hence, for each fixed $(B,m_1^q,m_2^q)$, the gap ratio is computed from $31 \times 31 \times 61$ sample operating points.


\begin{figure}[!t]
\centering
\subfloat[$B=2$ p.u.]{\includegraphics[width = 4.4cm]{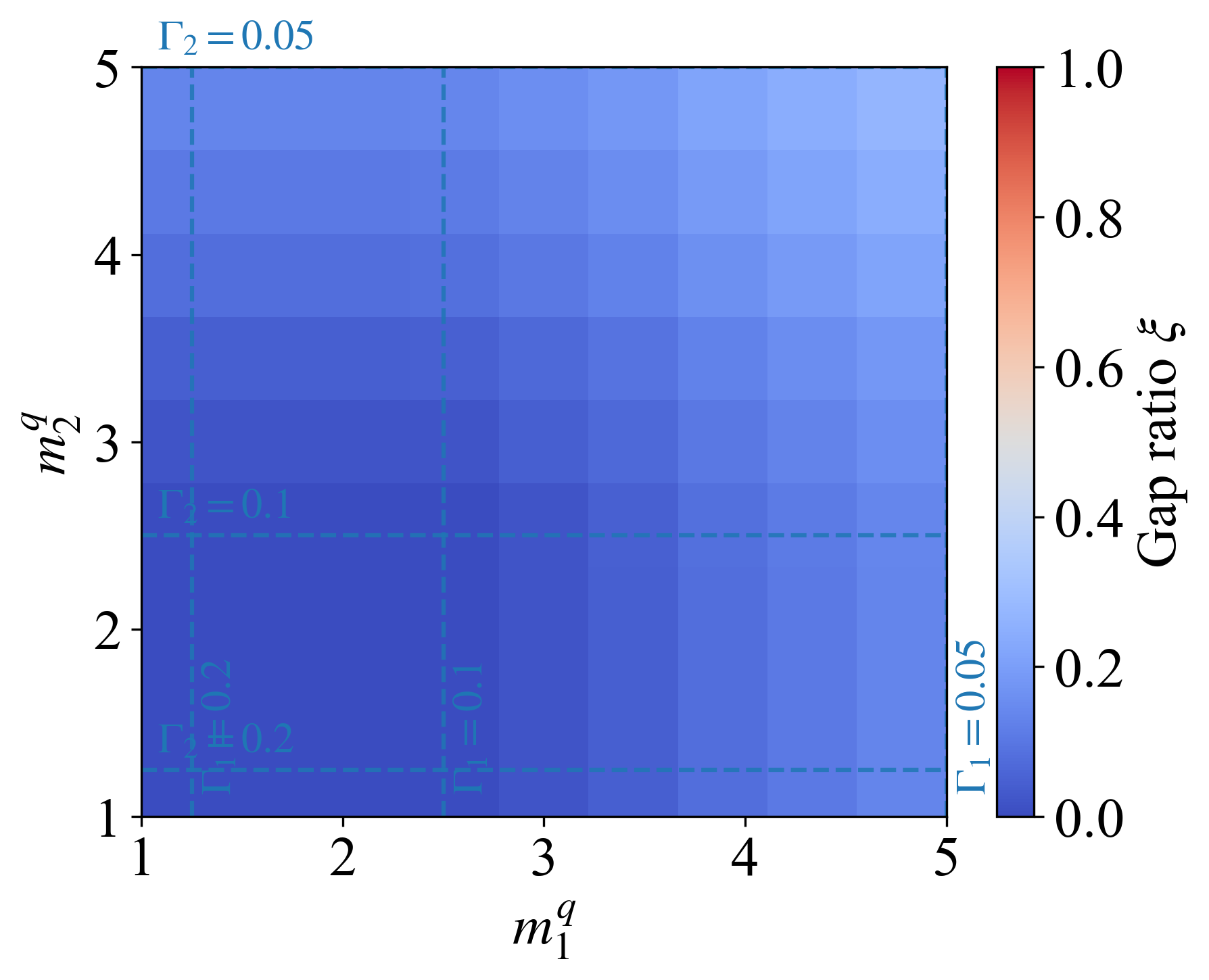}\label{figsub:1}}
\subfloat[$B=4$ p.u.]{\includegraphics[width = 4.4cm]{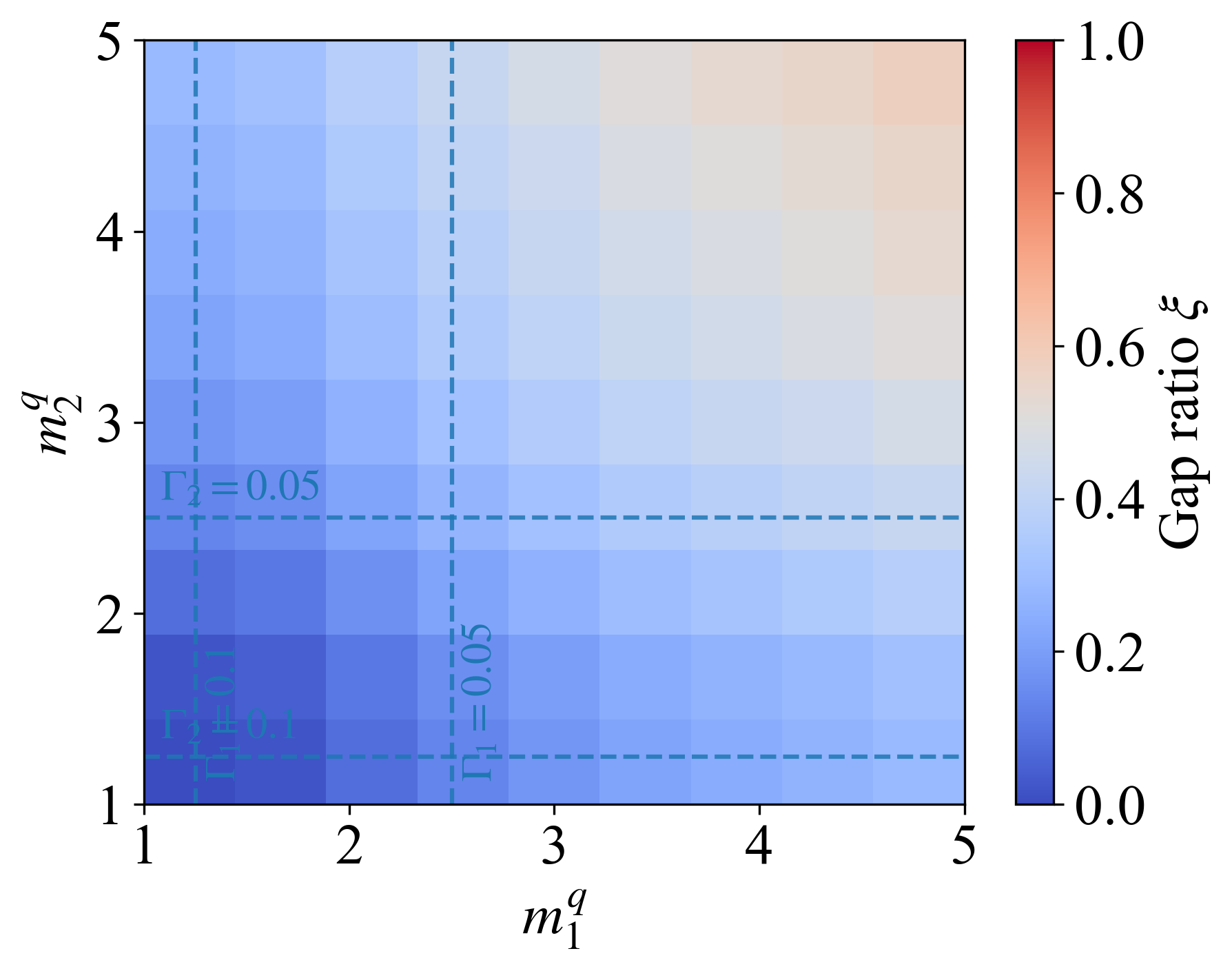}\label{figsub:3}}\hfil
\subfloat[$B=6$ p.u.]{\includegraphics[width = 4.4cm]{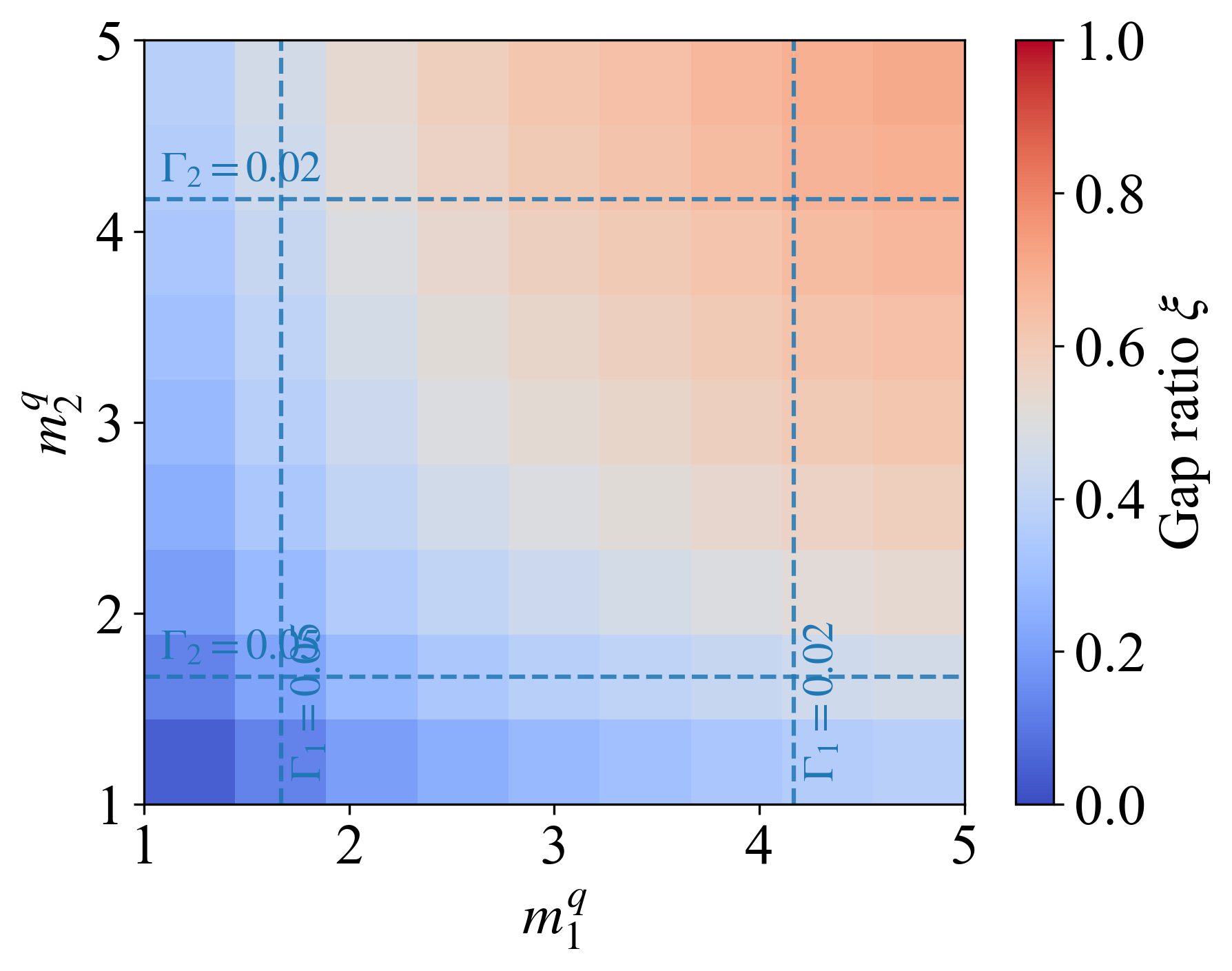}\label{figsub:4}}
\subfloat[$B=8$ p.u.]{\includegraphics[width = 4.4cm]{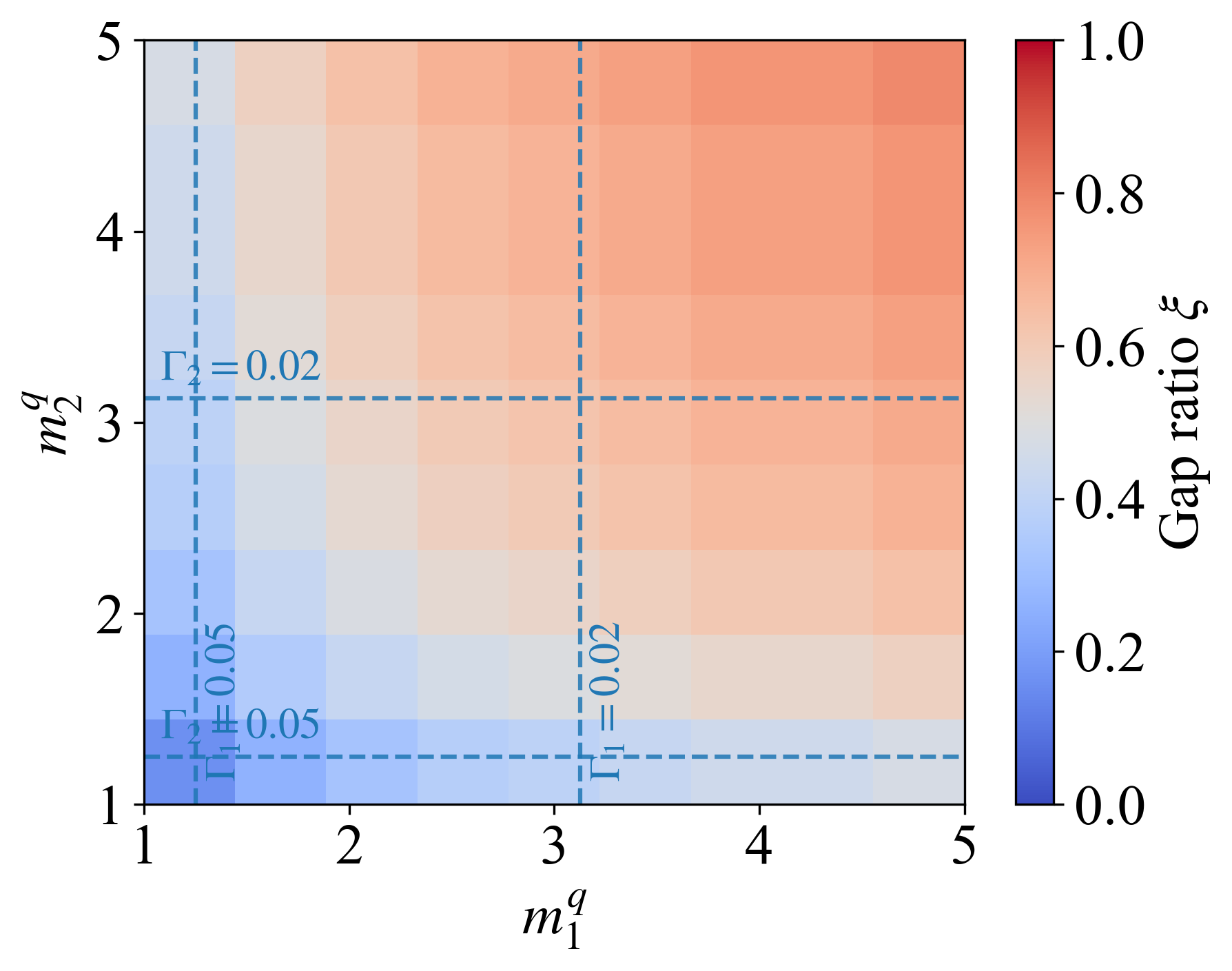}\label{figsub:5}}
\caption{Heatmap of the gap ratio $\xi$ over the space of droop parameters $m_1^q, m_2^q$ for different values of $B$.}
\label{fig5}
\end{figure}

Figure~\ref{fig5} shows the heatmap of the gap ratio $\xi$ over the droop parameter space $(m_1^q,m_2^q)$ for different values of $B$. Several clear trends can be observed. First, for each fixed value of $B$, the gap ratio $\xi$ increases monotonically with both $m_1^q$ and $m_2^q$. In particular, the lower-left region (small droop coefficients) exhibits near-zero gap ratio, indicating that the decentralized condition closely matches the eigenvalue-based stability region. In contrast, as $m_1^q$ and $m_2^q$ increase, the gap ratio gradually approaches one, meaning that a large fraction of stable operating points does not pass the decentralized criterion. This behavior shows that the proposed condition becomes more conservative for larger droop coefficients. Second, the conservativeness of the decentralized criterion also depends strongly on the network parameter $B$. As $B$ increases, the overall level of $\xi$ rises significantly across the entire parameter space. 
These trends can be explained by the structure of the decentralized stability bounds $\Gamma_i = \frac{1}{2 m_i^q \beta_q B}$, where increasing either the droop coefficient $m_i^q$ or the network parameter $B$ reduces the admissible voltage-difference bound on $(V_1,V_2)$. 
In summary, the results demonstrate that the proposed decentralized stability criterion is tight for small droop gains and weak coupling, but becomes increasingly conservative as the droop gains or network strength increase.

\begin{figure}[!t]
\centering
\subfloat[$B=2$ p.u.]{\includegraphics[width = 4.4cm]{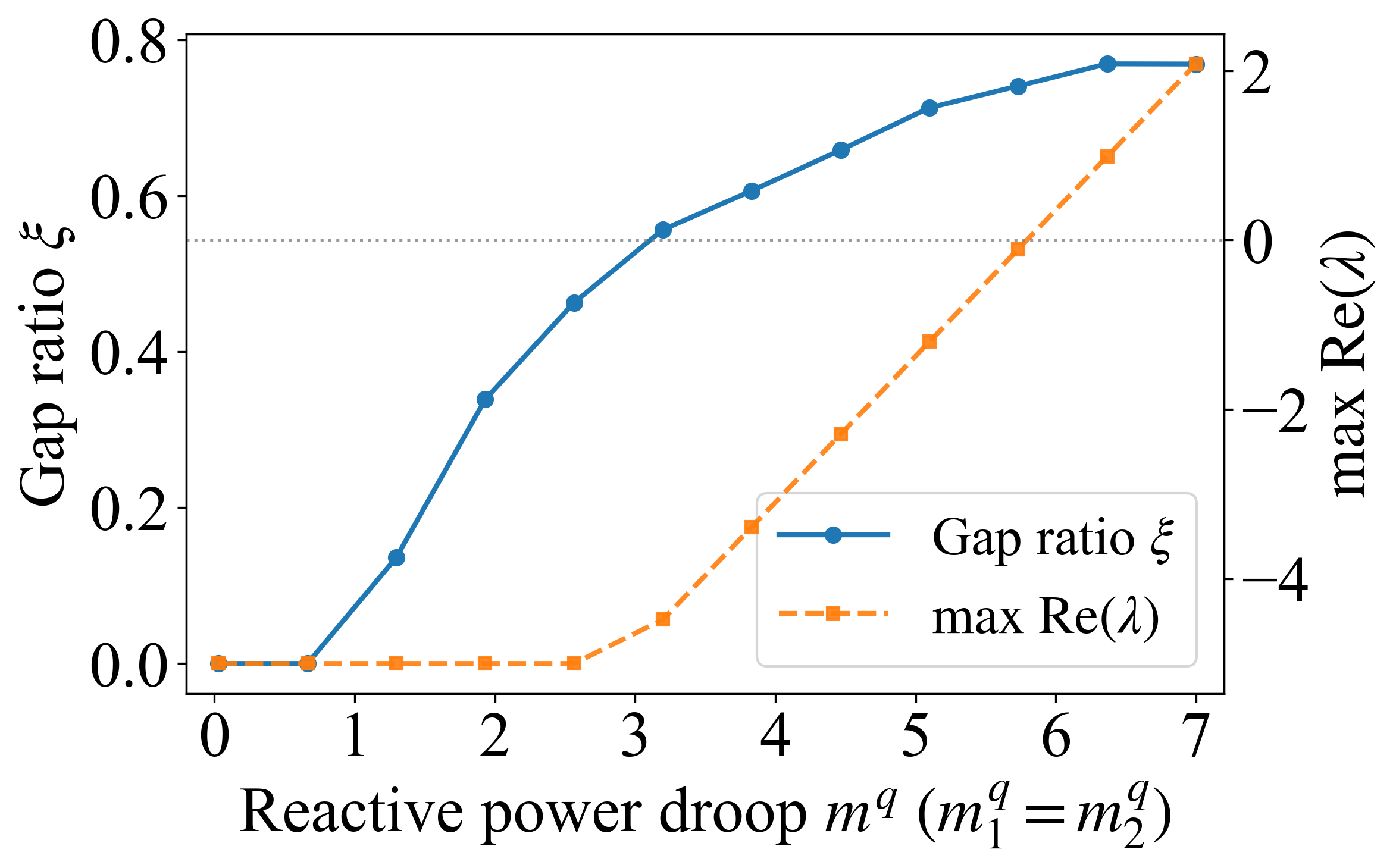}\label{figsub:1}}
\subfloat[$B=4$ p.u.]{\includegraphics[width = 4.4cm]{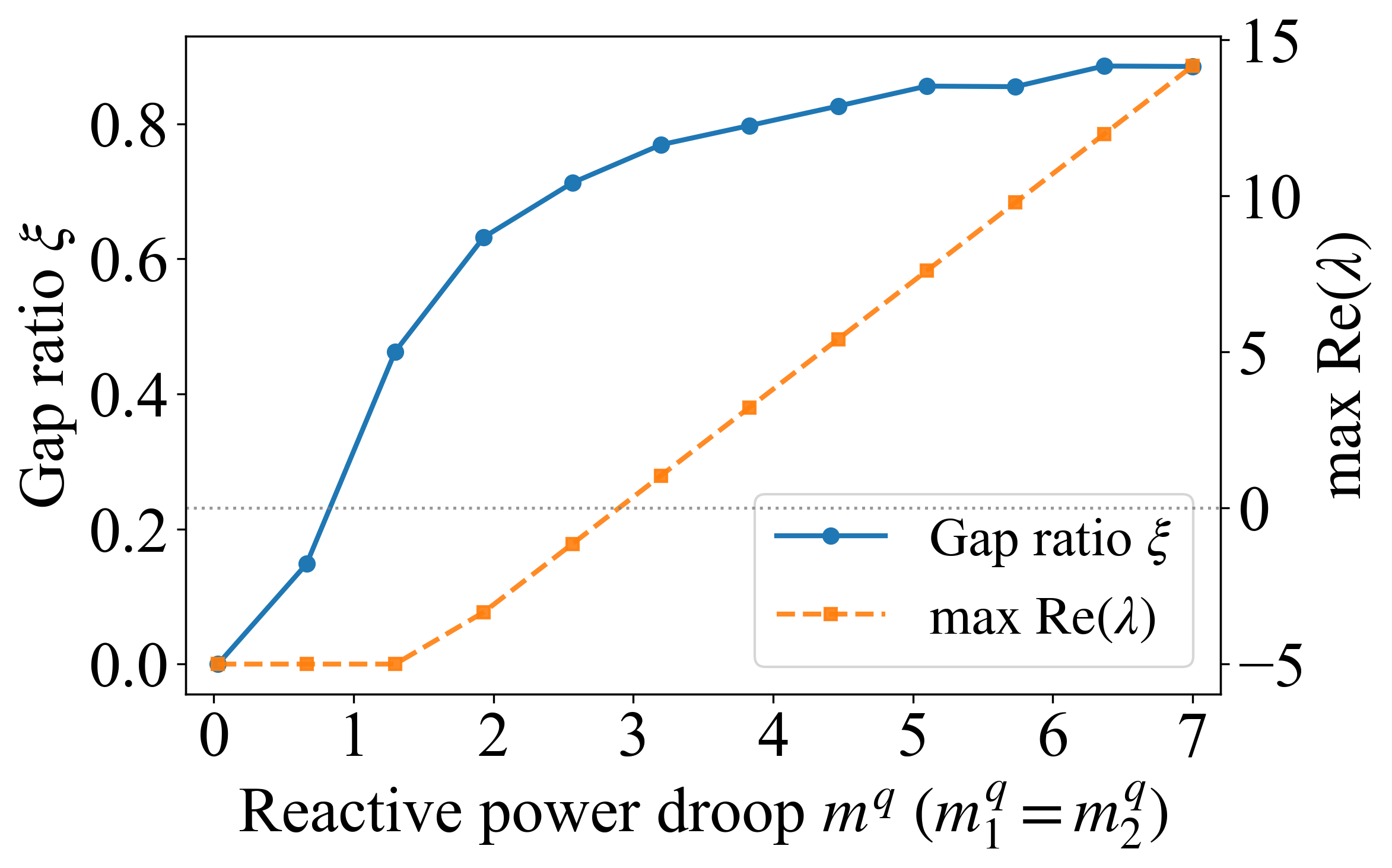}\label{figsub:3}}\hfil
\subfloat[$B=6$ p.u.]{\includegraphics[width = 4.4cm]{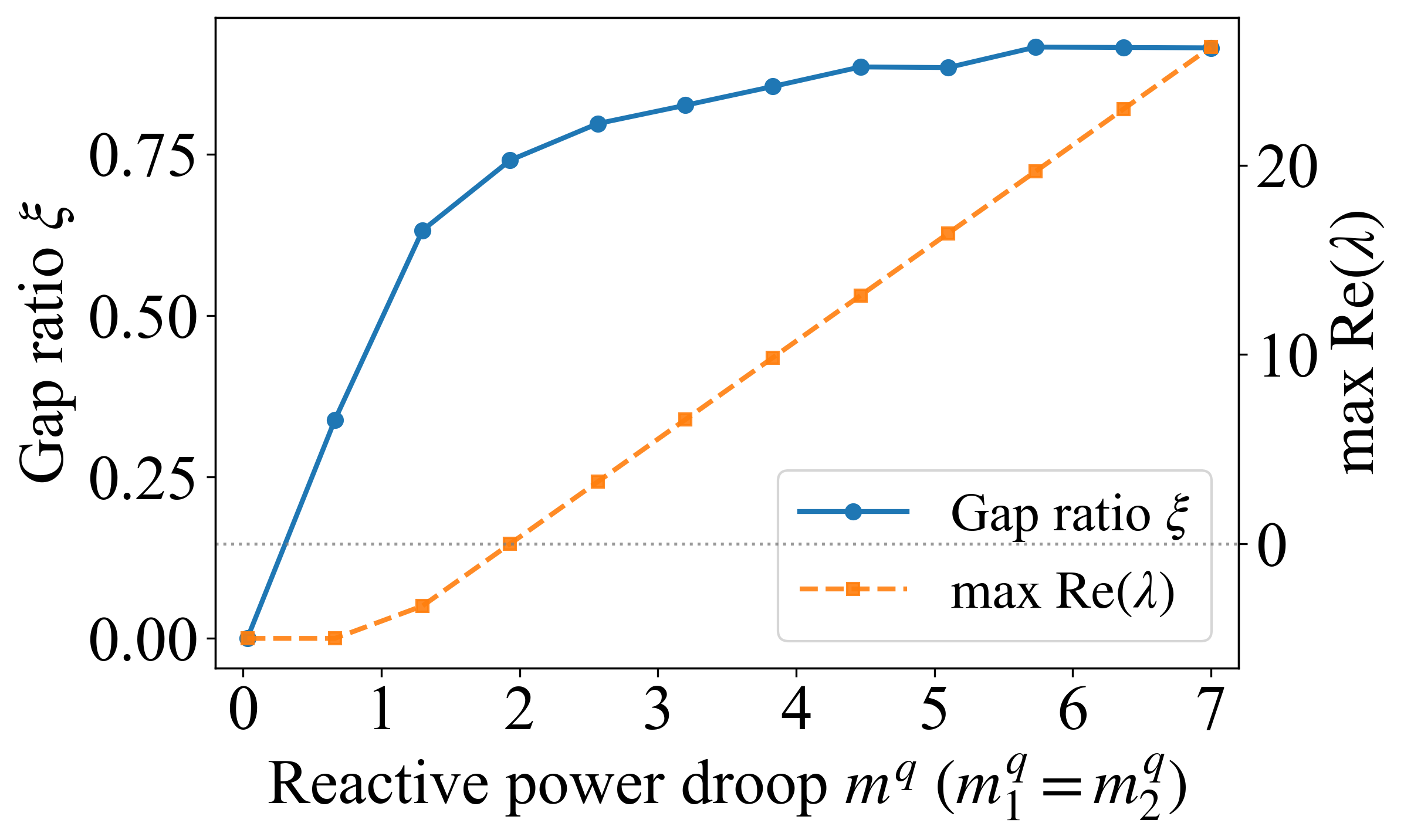}\label{figsub:4}}
\subfloat[$B=8$ p.u.]{\includegraphics[width = 4.4cm]{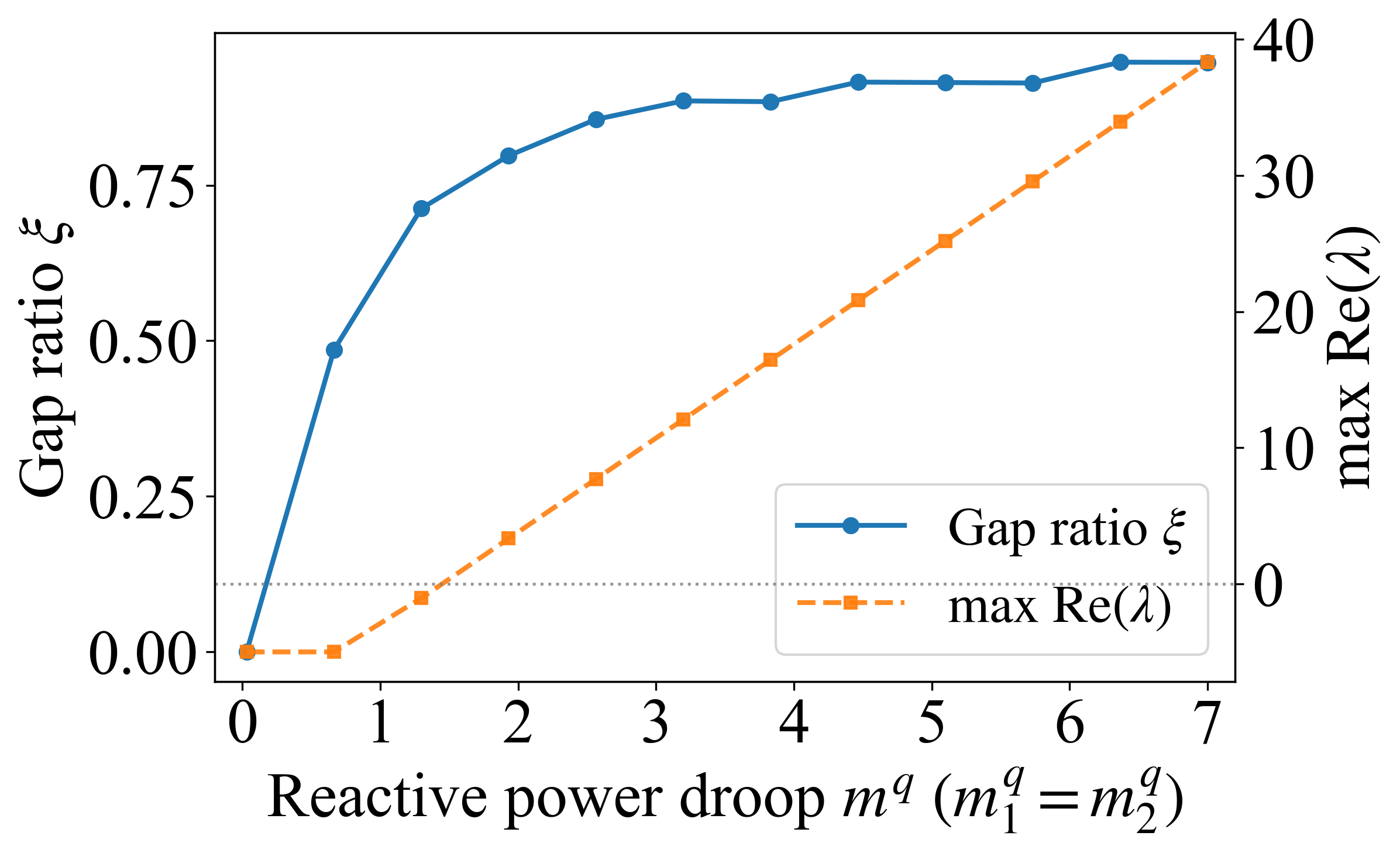}\label{figsub:5}}
\caption{Gap ratio $\xi$ and the eigenvalue-based stability margin $\max \operatorname{Re}(\lambda)$ along $m_1^q = m_2^q$ for different values of $B$.}
\label{fig6}
\end{figure}



Figure~\ref{fig6} further compares the gap ratio $\xi$ and the eigenvalue-based stability margin $\max \operatorname{Re}(\lambda)$ (the trivial zero eigenvalue caused by angle invariance is removed) along the symmetric slice $m_1^q = m_2^q = m^q$ with $m^q \in [0.03,\, 7]$ for different values of $B$. All parameters are kept the same as in the previous experiment except the operating-point scan is enlarged to a $61 \times 61 \times 61$ grid for $V_1,V_2 \in [0.85,1.15]$ and $\theta_2 \in [-0.525,\,0.525]$.
Consistent with the heatmap results, the gap ratio exhibits an overall increasing trend with respect to $m^q$ and gradually saturates close to one. 

While the decentralized admissible set shrinks monotonically with $m^q$, the eigenvalue-stable set also changes with $m^q$ due to the variation of the system eigenvalues.
In particular, operating points near the stability boundary may switch their classification as $m^q$ varies, leading to small local decreases of $\xi$. 
The variation of $\max \operatorname{Re}(\lambda)$ with respect to $m^q$ is governed primarily by a single dominant mode associated with the reactive power-voltage dynamics. 
After reordering the states, the Jacobian can be decomposed into an active power-angle block and a reactive power-voltage block. The active power-angle subsystem contains one trivial zero eigenvalue due to angle invariance, and its remaining modes are independent of $m^q$, with real parts determined by $-1/\tau_p$ and thus strictly negative. Similarly, part of the reactive power dynamics contributes modes with real parts dominated by $-1/\tau_q$, which are also insensitive to $m^q$. As a result, the dependence on $m^q$ enters only through a single reactive power-voltage mode, whose real part increases with $m^q$ and eventually determines $\max \operatorname{Re}(\lambda)$. 

This structural insight also explains the different behaviors observed for small and large values of $B$. For small $B$ (e.g., $B=2$), the worst-case eigenvalue initially corresponds to an $m^q$-insensitive mode (e.g., one with real part close to $-1/\tau_p$ or $-1/\tau_q$), resulting in a nearly flat region of $\max \operatorname{Re}(\lambda)$ for small $m^q$. As $m^q$ increases, the reactive power-voltage mode becomes dominant, leading to a transition in the identity of the worst-case eigenvalue and a subsequent increase in $\max \operatorname{Re}(\lambda)$. In contrast, for larger values of $B$, the reactive power-voltage mode is already the least stable mode even at small $m^q$, so that $\max \operatorname{Re}(\lambda)$ increases monotonically with $m^q$ from the outset.

\subsection{Illustration of Shadow Price for Stability Constraints}

\subsubsection{2-bus system}
We first consider a 2-bus example to illustrate the theoretical results developed in this paper.
In this system, the stability constraints are expressed as,
\begin{align}
    & \mathrm{stab1}: \ V_{2}-V_{1}\leq \Gamma_1,\nonumber\\
& \mathrm{stab2}: \ V_{1}-V_{2}\leq \Gamma_2,\nonumber
\end{align}
where $\Gamma_1 = \frac{1}{2m^q_1\beta^q_1B}$, and $\Gamma_2 = \frac{1}{2m^q_2\beta^q_2B}$. 

In the following, we investigate the role of the voltage-difference stability constraints. For clarity, we parameterize these constraints directly in terms of $\Gamma_1$ and $\Gamma_2$, rather than the droop coefficients $m_i^q$, since $\Gamma_i$ explicitly determines the admissible voltage deviation and thus the effective size of the feasible region.

First, we consider the setting where the objective depends only on active power, in which independently binding stability constraints arise but exhibit zero shadow price, as proved in Theorem~\ref{thm:zero_shadow}.

The parameters are chosen as
$B=10$, $\overline P=2.5$, $\overline Q=2$, $\underline V=0.95$, $\overline V=1.05$, $\overline S=3$, with the reference-bus voltage fixed at $V_1=1$, generation cost coefficients
$(a_1,b_1,c_1)=(0,0.55,0.12)$, $(a_2,b_2,c_2)=(0,0.6,0.16)$, and load values $(P_{D,1},P_{D,2})=(0.45,0.7)$, $(Q_{D,1},Q_{D,2})=(0.45,0)$. We scan the upper voltage-difference limit over $\Gamma_1\in\{0.005,0.01,0.015,0.02,0.025,0.03\}$, and focus on the decentralized constraint $\mathrm{stab1}: \ V_2-V_1\le \Gamma_1$. The resulting nonlinear optimization problems are solved using a sequential quadratic programming (SQP) method with multiple initializations.

\begin{table}[t]
\centering
\caption{Independent binding decentralized stability constraint with zero shadow price under active-power-only cost}
\label{tab:zero_shadow}
\begin{tabular}{c c c c c c}
\toprule
$\Gamma_1$ & $J_{\mathrm{obj}}$ & $\theta_2$ & $V_2$ & Independent binding & $\lambda^{\mathrm{stab}}_{12}$ \\
\midrule
0.005 & 0.7456 & -0.02950 & 1.005 & Yes & 0.0 \\
0.010 & 0.7456 & -0.02935 & 1.010 & Yes & 0.0 \\
0.015 & 0.7456 & -0.02921 & 1.015 & Yes & 0.0 \\
0.020 & 0.7456 & -0.02907 & 1.020 & Yes & 0.0 \\
0.025 & 0.7456 & -0.02892 & 1.025 & Yes & 0.0 \\
0.030 & 0.7456 & -0.02878 & 1.030 & Yes & 0.0 \\
\bottomrule
\end{tabular}
\end{table}

Table~\ref{tab:zero_shadow} numerically verifies Theorem~\ref{thm:zero_shadow} for the active-power-only objective case. For all tested values of $\Gamma_1$, the boundary solution is independently supported by $\mathrm{stab1}$, that is, the active set is $\{\mathrm{stab1}\}$ in every case. Meanwhile, although $(\theta_2,V_2)$ varies with $\Gamma_1$, the objective value remains unchanged, indicating that these solutions lie on a flat optimal manifold. Moreover, we numerically verify that each boundary solution is a local minimizer. The shadow prices $\lambda_{12}^{\mathrm{stab}}$ are zero for all cases, confirming that the stability constraint, although independently binding, does not impose any marginal economic cost.

Second, we incorporate reactive power costs into the objective function, which can admit independently binding stability constraints with positive shadow prices, as proved in Theorem~\ref{thm:qcost}. 

The parameters are chosen as $B=8$, $\overline P=2.5$, $\overline Q=2$, $\underline V=0.95$, $\overline V=1.05$, $\overline S=3$, with the reference-bus voltage fixed at $V_1=1$. The cost coefficients are given by $(a_1,b_1,c_1,d_1)=(0,0.2,0.05,8)$, $(a_2,b_2,c_2,d_2)=(0,0.8,0.4,3)$.
The nominal load values are $(P_{D,1},P_{D,2})=(0.15,0.95)$, $(Q_{D,1},Q_{D,2})=(0.45,0.05)$, and are slightly perturbed to identify representative operating points.

\begin{table}[t]
\centering
\caption{Independent binding with positive shadow price under reactive-power cost}
\label{tab:positive_shadow}
\begin{tabular}{ccccccc}
\toprule
$\Gamma_1$ & $J_{\mathrm{obj}}$ & $\theta_2$ & $V_2$ & Independent binding & $\lambda_{12}^{\mathrm{stab}}$ \\
\midrule
0.040 & 1.4009 & $-0.0951$ & 1.0400 & Yes & 8.0483 \\
0.042 & 1.3878 & $-0.0959$ & 1.0420 & Yes & 5.0701 \\
--    & 1.3792 & $-0.0973$ & 1.0454 & No  & --     \\
\bottomrule
\end{tabular}
\end{table}

Table~\ref{tab:positive_shadow} reports three operating points corresponding to different treatments of the stability constraint $\mathrm{stab1}$. The first row is a baseline case where the constraint is active, the second row relaxes $\Gamma_1$, and the third row removes the constraint entirely. In the first two rows, $\mathrm{stab1}$ is the only active inequality constraint and admits a strictly positive shadow price. This shows that, by appropriately chosen reactive-power cost coefficients while keeping the active-power cost fixed, one can realize an operating point consistent with the first-order structure characterized in Proposition~\ref{thm:necessary}. Moreover, when local optimality holds and all other inequality constraints remain inactive, Proposition~\ref{thm:sufficient} further guarantees that the stability constraint is independently binding with $\lambda_{12}^{\mathrm{stab}}>0$. Relaxing $\Gamma_1$ decreases the objective, and removing the constraint yields a further decrease, confirming that the stability constraint imposes a nonzero marginal cost.


\subsubsection{IEEE 39-bus system}

We consider the IEEE 39-bus system, which consists of 10 generator buses. The voltage base is 345 kV and power base is 100 MVA. Bus 31 is chosen as the reference bus, and its voltage angle is fixed to zero. The generation cost coefficients are taken from the standard test case and converted to the per-unit setting. 
All branch resistances and shunt components are set to zero. Accordingly, the susceptance matrix is given by $B=-\operatorname{Im}(Y_{\mathrm{bus}})$.

The problem is formulated following $\mathcal P_1$, where the decentralized 
stability constraints in \eqref{eq:pairwise} are incorporated. For constructing the stability constraints, Kron reduction is performed by eliminating all non-generator buses, yielding the reduced admittance matrix \(Y_{\mathrm{red}}\)\footnote{To construct the reduced network, constant-power loads are first converted into equivalent shunt admittances under nominal bus voltage \(V=1\) p.u, and added to the diagonal of the bus admittance matrix.} and the corresponding reduced susceptance matrix
$B^{\mathrm{red}}=-\operatorname{Im}(Y_{\mathrm{red}})$. This renders a 10-bus reduced network. Moreover, in the reduced network, 
all retained generator buses are pairwise adjacent, and the stability 
constraints are imposed for all ordered pairs of distinct generator buses. 
As a result, a total of $10\times 9=90$ voltage-difference stability 
constraints are included.

We first consider the $P$-only objective in \eqref{eq:general_cost} by setting $d_i=0$. The resulting nonlinear optimization problem is implemented in Pyomo and solved using the IPOPT solver. Tight solver tolerances are used, with a convergence tolerance of $10^{-8}$ and an acceptable tolerance of $10^{-6}$. To improve convergence, a warm-start strategy is adopted, where each run is initialized from the solution of the previous parameter setting. 

\begin{figure}[!t]
\centering

\begin{minipage}{0.48\linewidth}
\centering
\begin{overpic}[width=\linewidth]{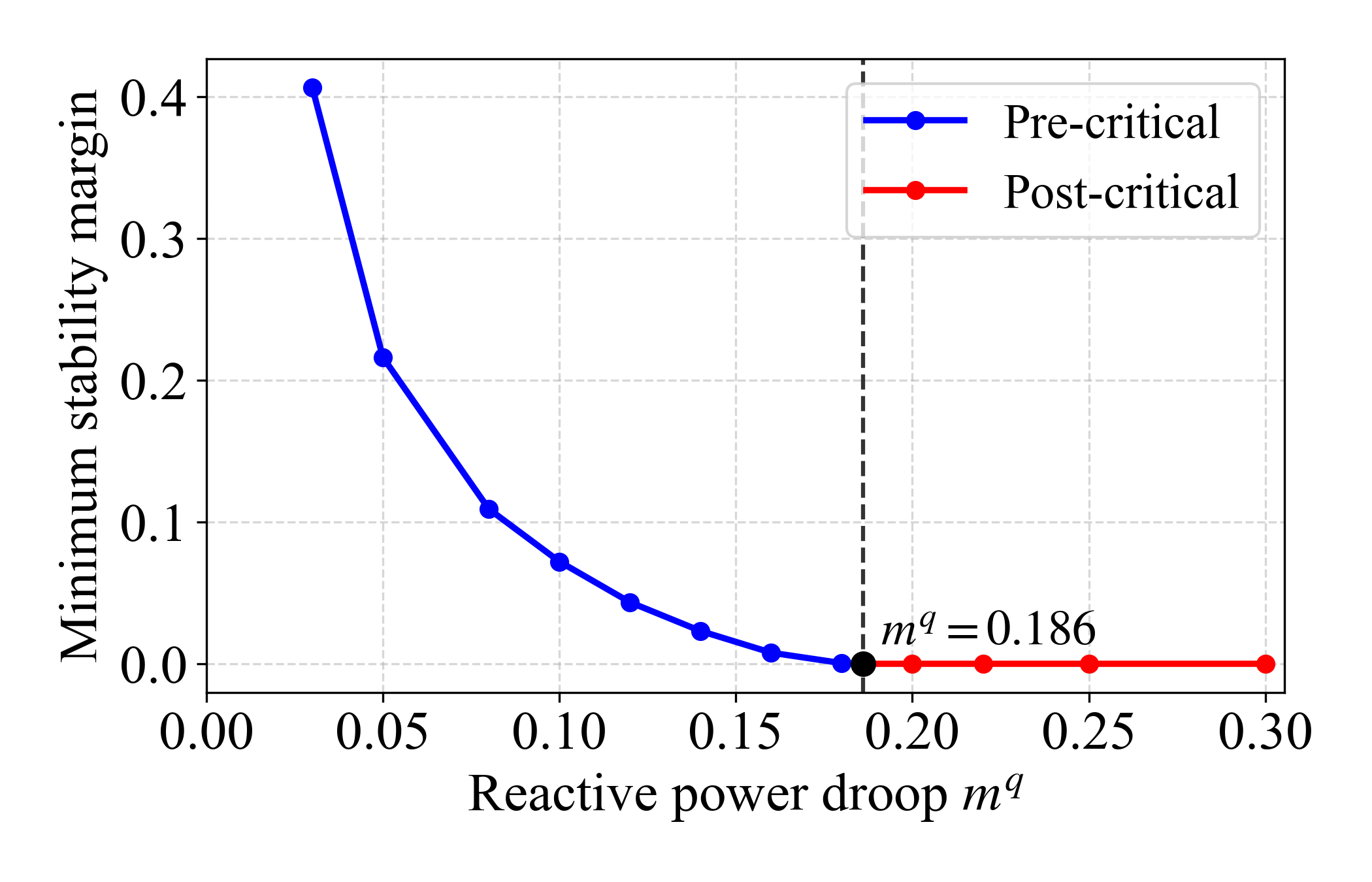}
    \put(0,5.5){\footnotesize (a)}
\end{overpic}
\end{minipage}
\hfill
\begin{minipage}{0.48\linewidth}
\centering
\begin{overpic}[width=\linewidth]{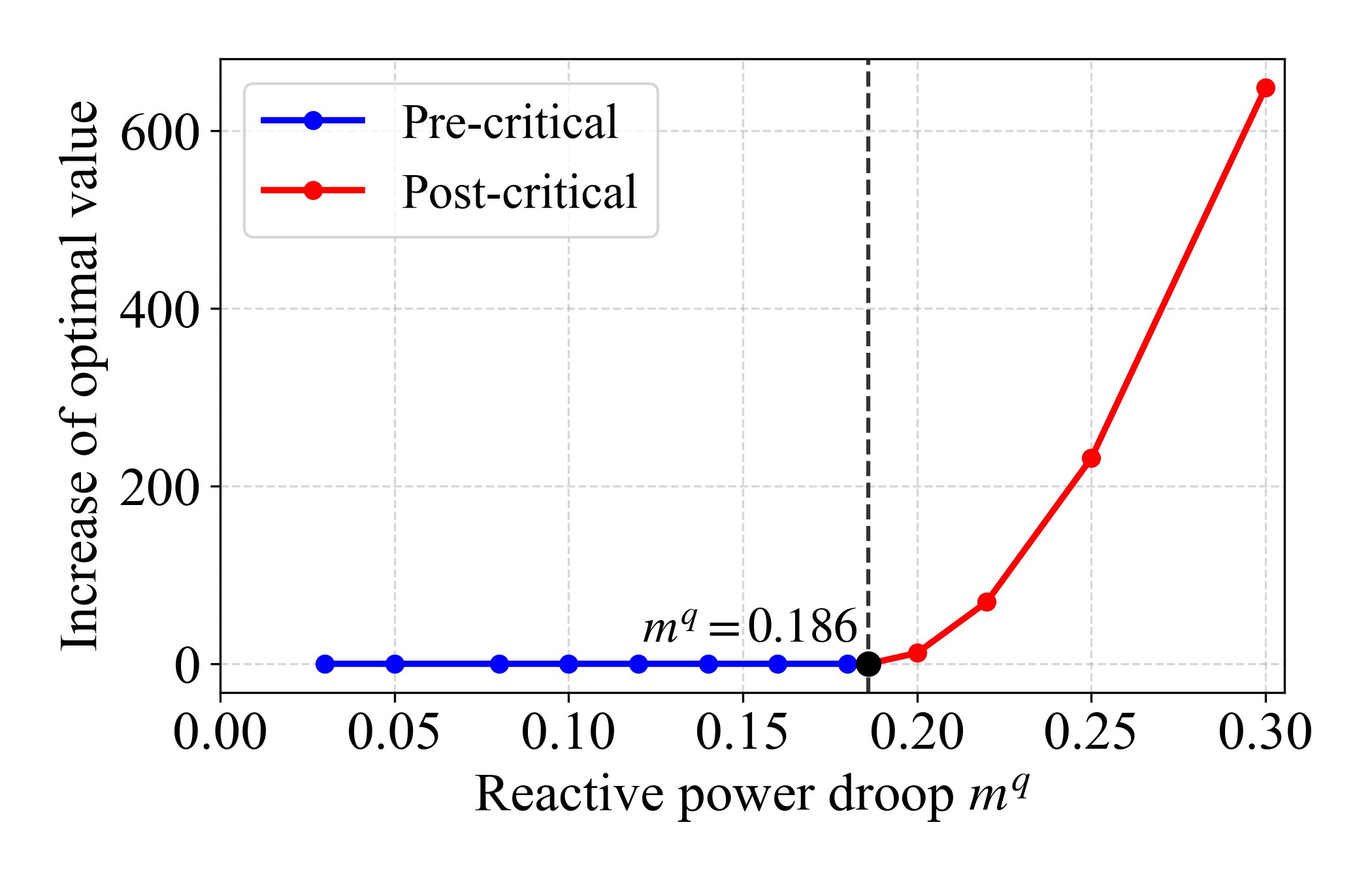}
    \put(0,5.5){\footnotesize (b)}
\end{overpic}
\end{minipage}

\caption{Effects of reactive power droop $m^q$ on: (a) Minimum stability margin, and (b) Increase of optimal value (compared to without stability constraints) in the IEEE 39-bus system under a $P$-only objective.}
\label{fig7}

\end{figure}

Figure~\ref{fig7} illustrates the impact of the reactive power droop parameter $m^q$ (uniform for all inverters) on the minimum stability margin and optimal value of the stability-constrained OPF. The minimum stability margin is defined as the smallest slack among all stability constraints, representing the closest distance to the stability boundary. As shown in Fig.~\ref{fig7}(a), the minimum stability margin decreases monotonically as $m^q$ increases, indicating that the stability constraints become progressively tighter. At a critical value $m^q \approx 0.186$, the minimum margin reaches zero, which indicates that at least one stability constraint becomes active. Fig.~\ref{fig7}(b) shows the corresponding increase in the optimal value relative to the baseline OPF without stability constraints. For $m^q$ below the critical value, the optimal value remains unchanged, indicating that the stability constraints are inactive and do not affect the optimal solution. 
For $m^q$ beyond the critical point, the optimal value increases rapidly, indicating that the stability constraints start to restrict the feasible set and force a deviation from the baseline optimal solution.

\begin{figure}[!t]
\centering

\begin{minipage}{0.48\linewidth}
\centering
\begin{overpic}[width=\linewidth]{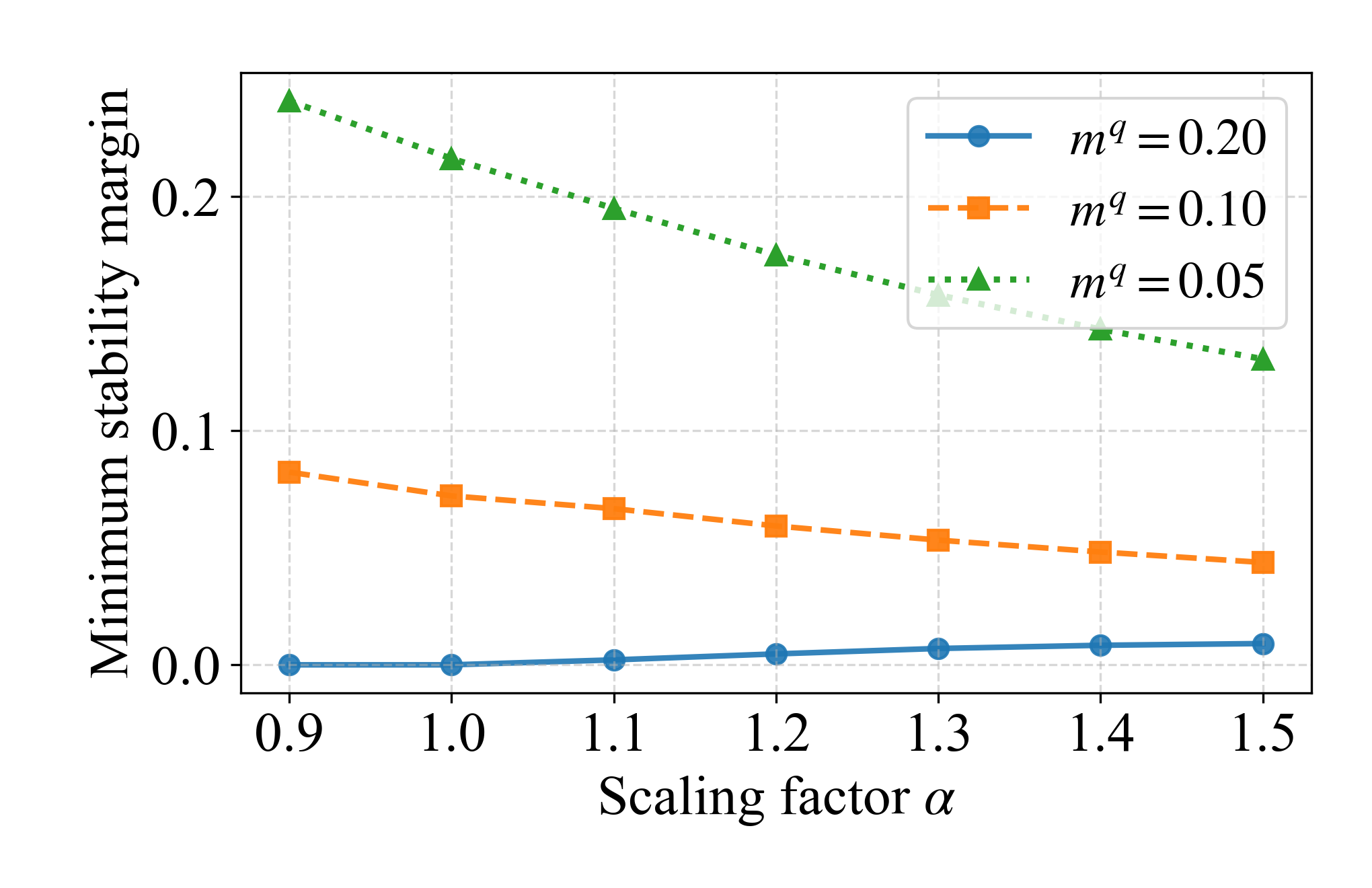}
    \put(0,5.5){\footnotesize (a)}
\end{overpic}
\end{minipage}
\hfill
\begin{minipage}{0.48\linewidth}
\centering
\begin{overpic}[width=\linewidth]{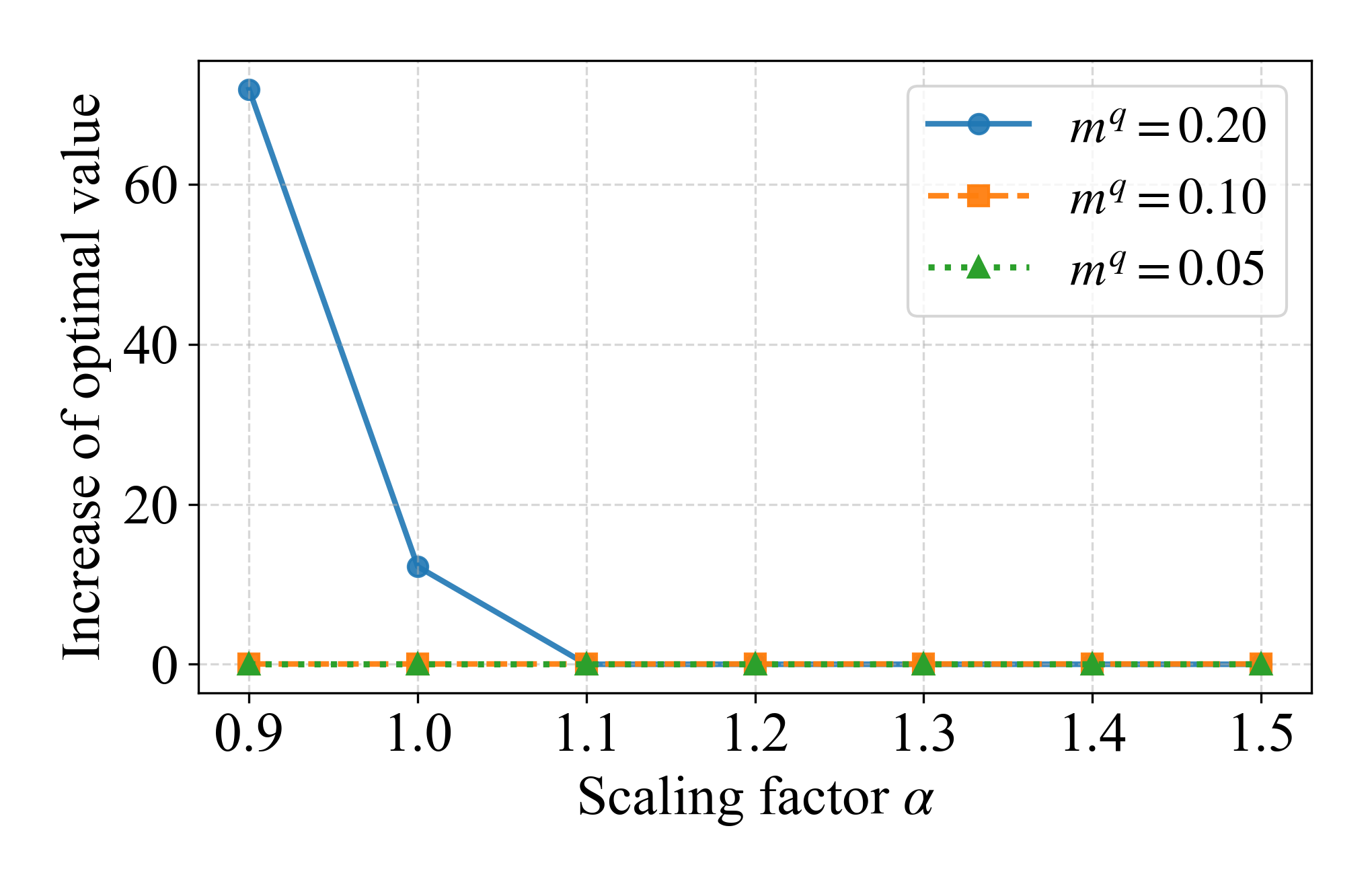}
    \put(0,5.5){\footnotesize (b)}
\end{overpic}
\end{minipage}

\caption{Effects of scaling factor $\alpha$ for network 
susceptance matrix on: (a) Minimum stability margin, and (b) Increase of optimal value (compared to without stability constraints) in the IEEE 39-bus system under a $P$-only objective.}
\label{fig8}

\end{figure}

Figure~\ref{fig8} examines the effect of the scaling factor $\alpha$ on network susceptance matrix for three values of $m^q$. The behaviors are qualitatively different on the two sides of the critical value $m^q \approx 0.186$ identified in Fig.~\ref{fig7}. For $m^q=0.05$ and $0.10$, which are below the critical value, the minimum stability margin decreases as $\alpha$ increases; see Fig.~\ref{fig8}(a). The stability constraints remain inactive, as also confirmed by the zero optimal value increase in Fig.~\ref{fig8}(b). Hence, the optimal solution is essentially the baseline OPF solution, and the observed reduction in the minimum margin is mainly due to the tightening of the stability bound $\frac{1}{2m_i^q|B_{ii}^{\mathrm{red}}|}$ as $\alpha$ increases. In contrast, for $m^q=0.2$, which is above the critical value, the trend is reversed: the minimum stability margin increases with $\alpha$, and the objective increase drops to zero once $\alpha$ becomes sufficiently large. This indicates that, the stability constraints are already influencing the OPF solution for small $\alpha$, and the re-optimized operating point evolves with $\alpha$ in a way that reduces the critical voltage differences. As a result, the reduction in the left-hand side of the active constraint dominates the shrinkage of the right-hand side, leading to an increase in the minimum margin. Moreover, Fig.~\ref{fig8}(b) shows that for $\alpha \ge 1.1$, the objective increase vanishes for all three values of $m^q$. This implies that all corresponding stability constraints are slack in this range, so the stability-constrained OPF coincides with the baseline OPF.

\begin{figure}[!t]
\centering

\begin{minipage}{0.48\linewidth}
\centering
\begin{overpic}[width=\linewidth]{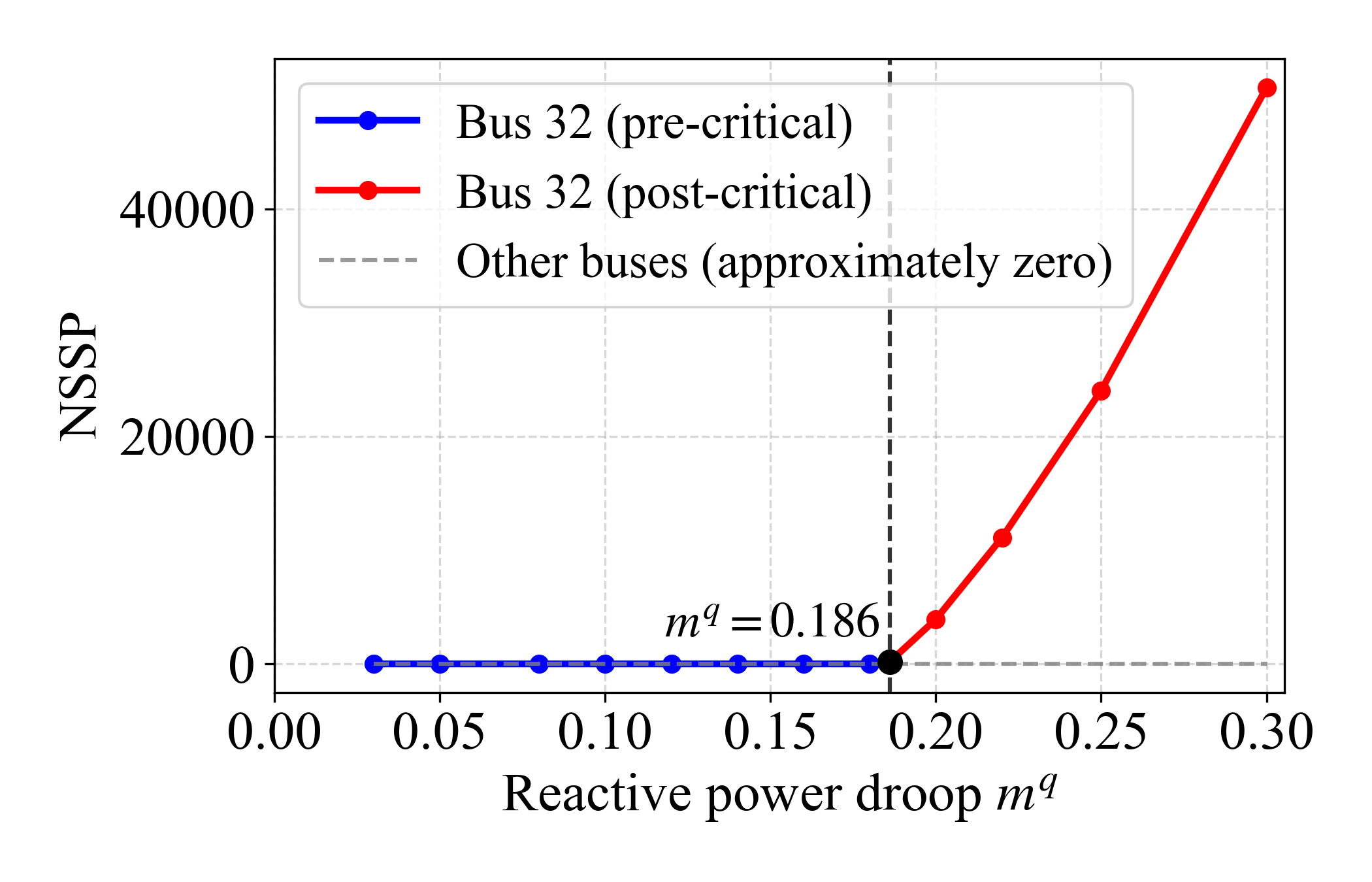}
    \put(0,5.5){\footnotesize (a)}
\end{overpic}
\end{minipage}
\hfill
\begin{minipage}{0.48\linewidth}
\centering
\begin{overpic}[width=\linewidth]{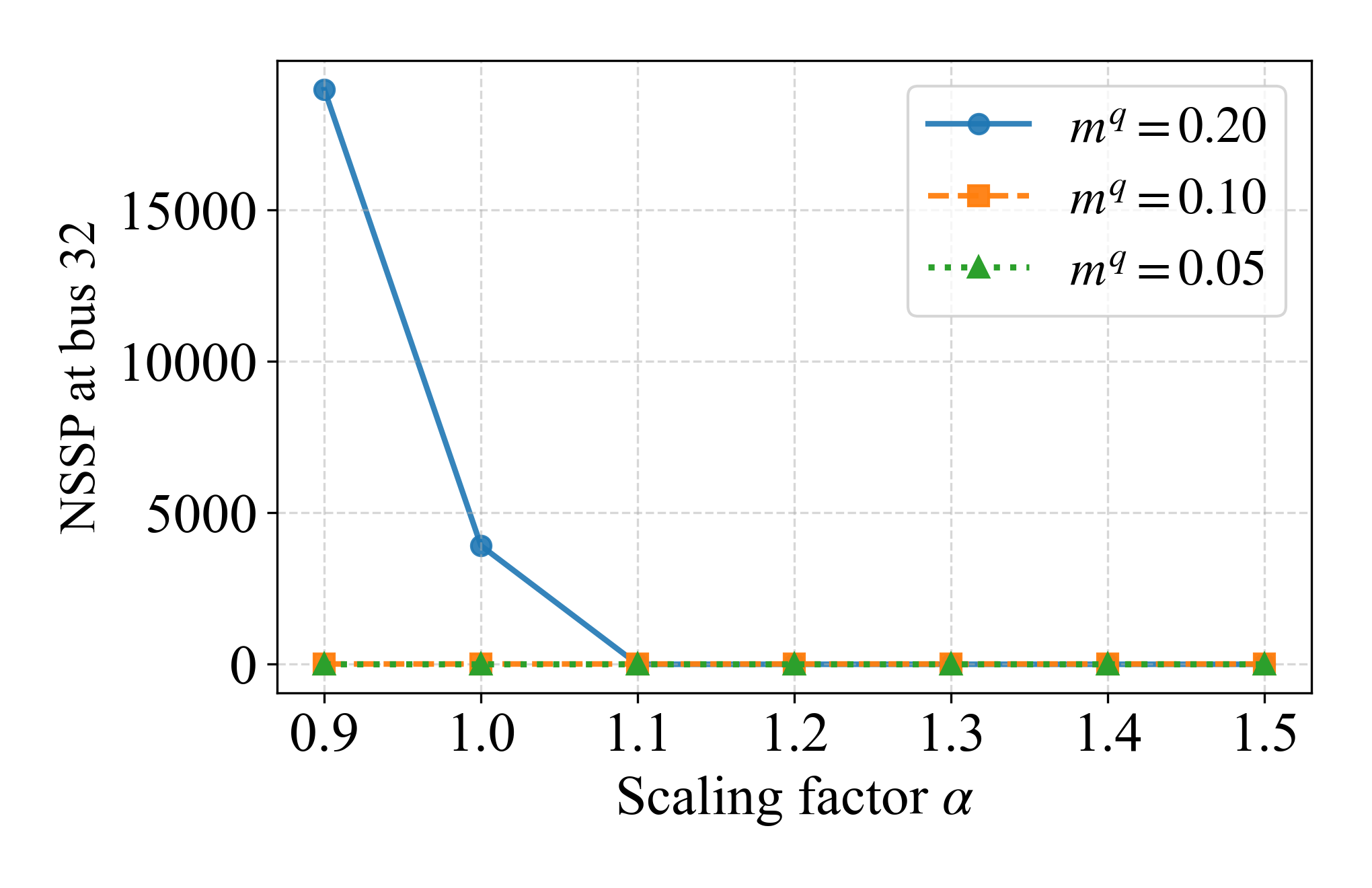}
    \put(0,5.5){\footnotesize (b)}
\end{overpic}
\end{minipage}

\caption{Effects of $m^q$ and $\alpha$ on nodal stability shadow prices: (a) Across all buses versus $m^q$, and (b) At bus~32 versus $\alpha$ for various $m^q$ in the IEEE 39-bus system under a $P$-only objective.}
\label{fig9}
\end{figure}

Figure~\ref{fig9} illustrates the nodal stability shadow prices \eqref{eq:NSSP} under the $P$-only objective. As observed, among the 10 generator buses, only bus 32 exhibits nonzero nodal stability shadow prices, while all other buses remain approximately zero. In Fig.~\ref{fig9}(a), the nodal shadow price at bus 32 remains zero below the critical value $m^q \approx 0.186$, and increases rapidly once $m^q$ exceeds this threshold. This behavior is consistent with Fig.~\ref{fig7}, where the stability constraints become active only beyond the critical value. More specifically, the nonzero nodal shadow price at bus 32 arises from multiple binding stability constraints associated with this bus, such as the pairs $(32,30)$, $(32,36)$, $(32,37)$, and $(32,38)$. These stability constraints are co-binding with several operational constraints, including upper bounds on active and reactive power generation, voltage magnitudes, and branch flow limits. Notably, the shadow prices associated with the stability constraints are observed to be significantly larger than those of the co-binding operational constraints, especially when $m^q$ exceeds the critical value. As $m^q$ increases further, the stability shadow prices grow rapidly and dominate the overall Lagrange multipliers at the critical node. This suggests that bus 32 becomes a critical location where the stability constraints actively interact with the OPF solution, leading to a concentration of nonzero shadow prices at this node. Fig.~\ref{fig9}(b) further shows the evolution of the nodal shadow price at bus 32 with respect to the network scaling factor $\alpha$. For $m^q=0.2$, the nodal shadow price is positive when $\alpha$ is small, and decreases to zero as $\alpha$ increases. This aligns with Fig.~\ref{fig8}, where the stability constraints become slack for sufficiently large $\alpha$. For $m^q=0.1$ and $m^q=0.05$, the nodal shadow prices remain zero across all values of $\alpha$, consistent with the fact that the stability constraints are inactive. These results demonstrate that the nodal stability shadow price provides a localized measure of how stability constraints impact the OPF solution, and that such impacts can be highly concentrated on a small subset of critical buses. In particular, even when multiple constraints are co-binding, the stability constraints can dominate the marginal cost, highlighting their critical role in shaping the optimal operating point.

\begin{figure}[!t]
\centering

\begin{minipage}{0.48\linewidth}
\centering
\begin{overpic}[width=\linewidth]{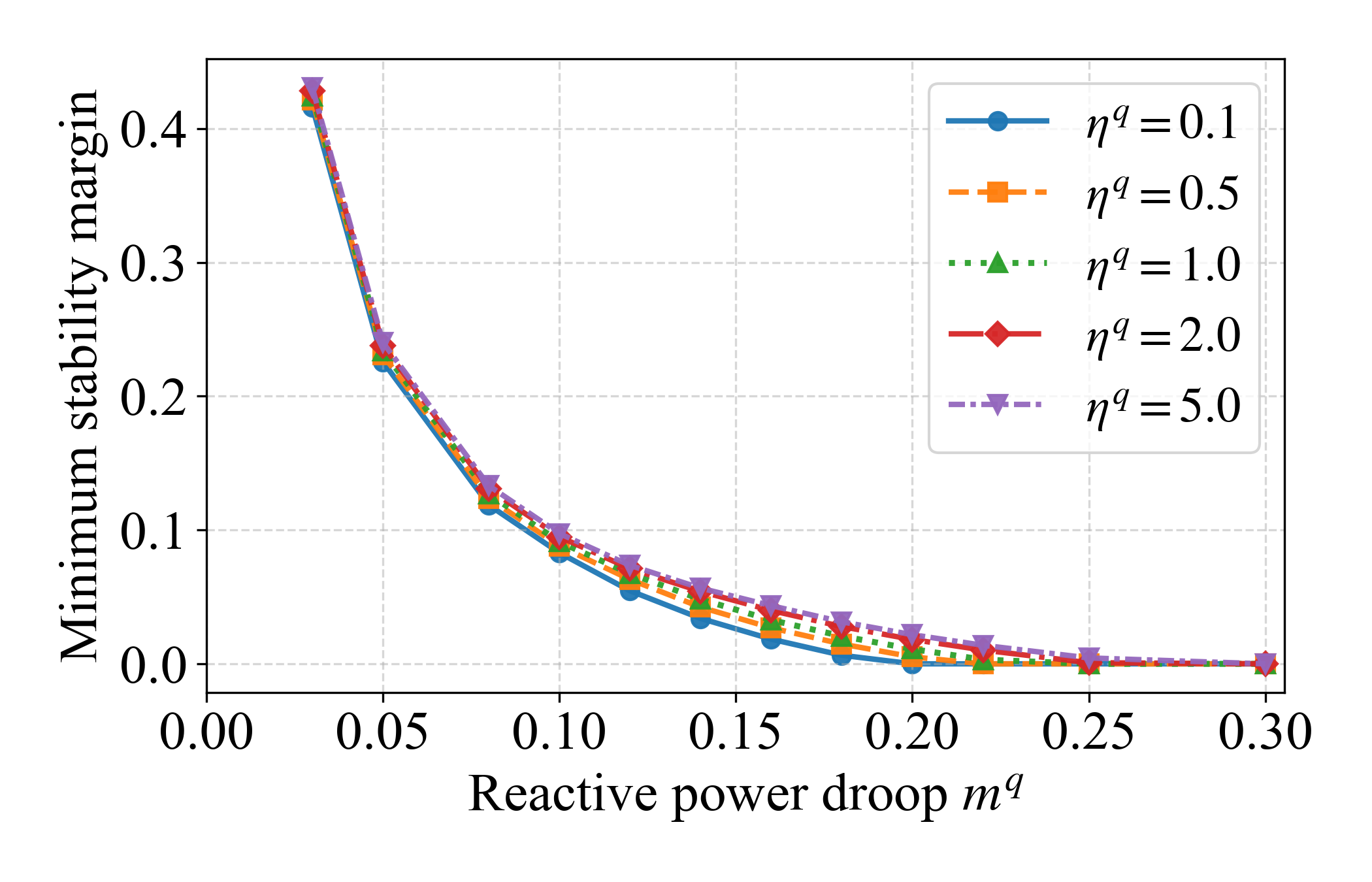}
    \put(0,5.5){\footnotesize (a)}
\end{overpic}
\end{minipage}
\hfill
\begin{minipage}{0.48\linewidth}
\centering
\begin{overpic}[width=\linewidth]{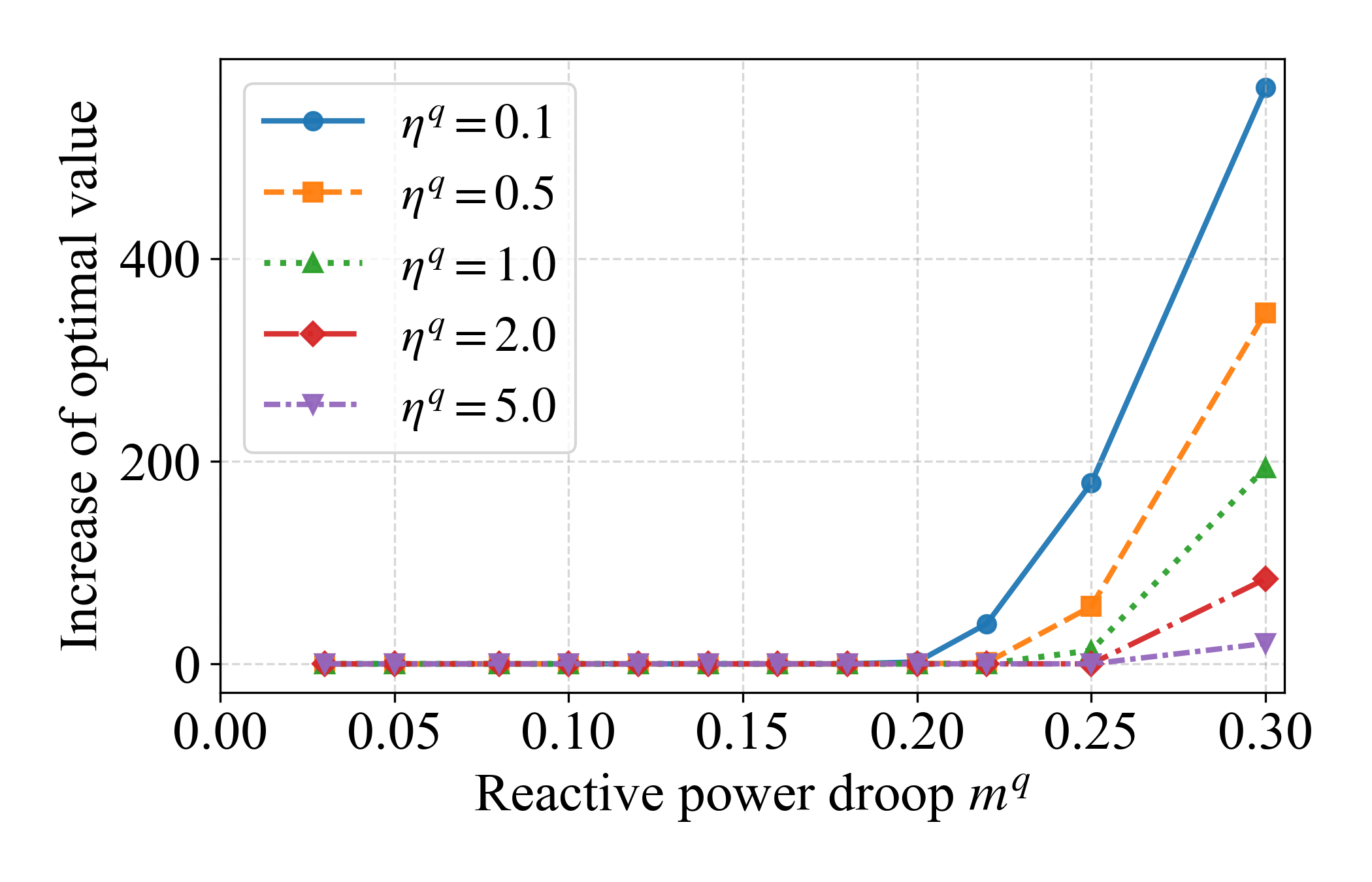}
    \put(0,5.5){\footnotesize (b)}
\end{overpic}
\end{minipage}

\caption{Effect of $m^q$ and $\eta^q$ on: (a) Minimum stability margin, and (b) Optimal value increase in the IEEE 39-bus system with reactive power costs.}
\label{fig10}

\end{figure}

We next consider an objective function \eqref{eq:general_cost} that incorporates a cost on reactive power with $d_i>0$. To parameterize the relative weight of reactive power cost, we set
$d_i = \eta^q c_i, \ \forall i \in \mathcal G$,
where $\eta^q$ represents the cost ratio between the quadratic terms of reactive and active power. In the following, $\eta^q$ is treated as a tuning parameter to examine how the inclusion of reactive power in the objective influences the stability constraints and their associated shadow prices.

Figure~\ref{fig10} shows the effect of incorporating reactive power into the objective function for different values of $\eta^q$. In Fig.~\ref{fig10}(a), the minimum stability margin decreases with $m^q$ for all values of $\eta^q$, consistent with the $P$-only case. However, compared to Fig.~\ref{fig7}, the margin curves are consistently shifted upward as $\eta^q$ increases. This upward shift suggests that the critical value of $m^q$ at which the stability constraints become active is increased. This effect is more clearly illustrated in Fig.~\ref{fig10}(b), where the objective remains unchanged up to a threshold value of $m^q$, and beyond which it increases sharply. As $\eta^q$ increases, this threshold shifts to larger values of $m^q$, providing direct evidence that the activation point of the stability constraints is postponed. This behavior can be interpreted as follows. The inclusion of the term $d_i Q_{G,i}^2$ penalizes large reactive power injections, which alters the optimal operating point of the OPF problem. In particular, it discourages solutions that rely on highly uneven reactive power dispatch across generators. As a result, the resulting voltage profile across generator buses tends to be more balanced, leading to reduced voltage differences between buses as shown in 
Fig.~\ref{fig:voltage_profile} and 
Fig.~\ref{fig:voltage_profile2}. This, in turn, increases the stability margin and delays the activation of the stability constraints. Moreover, for larger values of $\eta^q$, the objective increase beyond the threshold is significantly reduced. This indicates that incorporating reactive power cost not only postpones the activation of the stability constraints, but also mitigates their economic impact once they become active. These results show that including reactive power in the objective function effectively enlarges the admissible range of $m^q$ for which the stability constraints remain inactive.


\begin{figure}[t]
    \centering
    \includegraphics[width=0.8\linewidth]{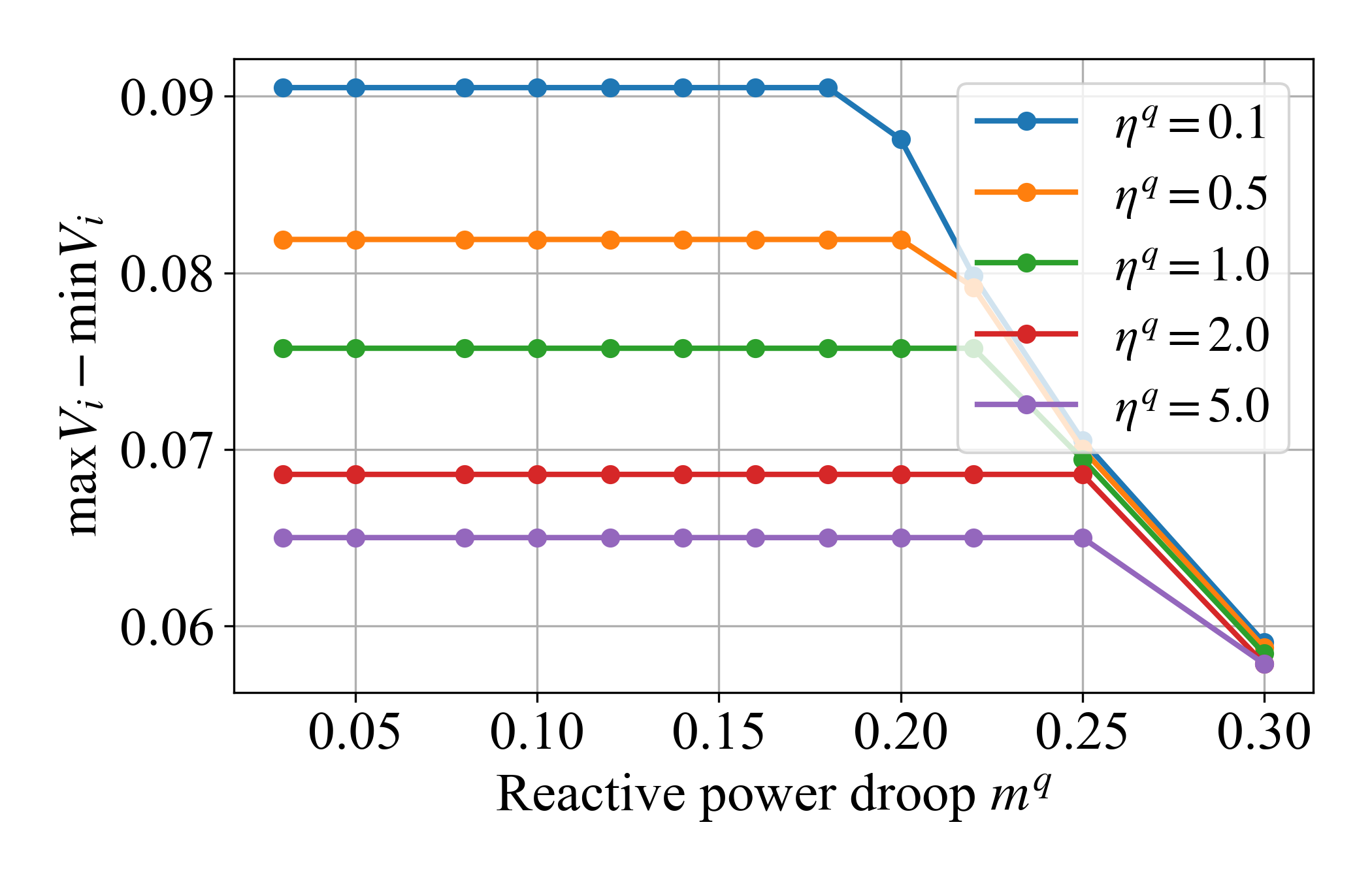}
    \caption{Maximum voltage difference across generator buses ($\max_i V_i - \min_i V_i$) under different values of $m^q$ and $\eta^q$.}
    \label{fig:voltage_profile}
\end{figure}

\begin{figure}[t]
    \centering
    \includegraphics[width=0.8\linewidth]{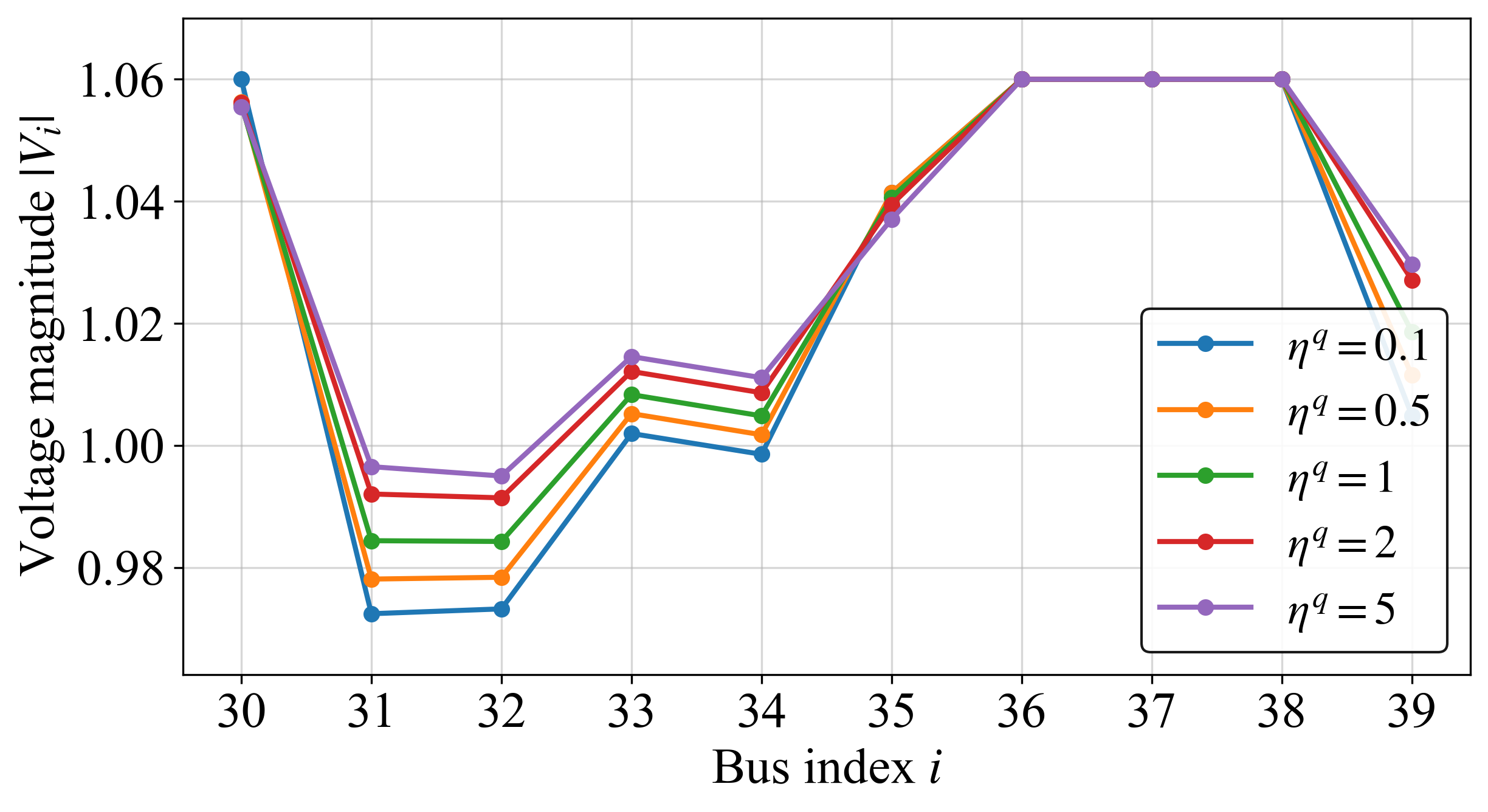}
    \caption{Voltage profiles across generator buses (30--39) for different values of $\eta^q$, with $m^q$ fixed at 0.2.}
    \label{fig:voltage_profile2}
\end{figure}

\begin{figure}[!t]
\centering

\begin{minipage}{0.48\linewidth}
\centering
\begin{overpic}[width=\linewidth]{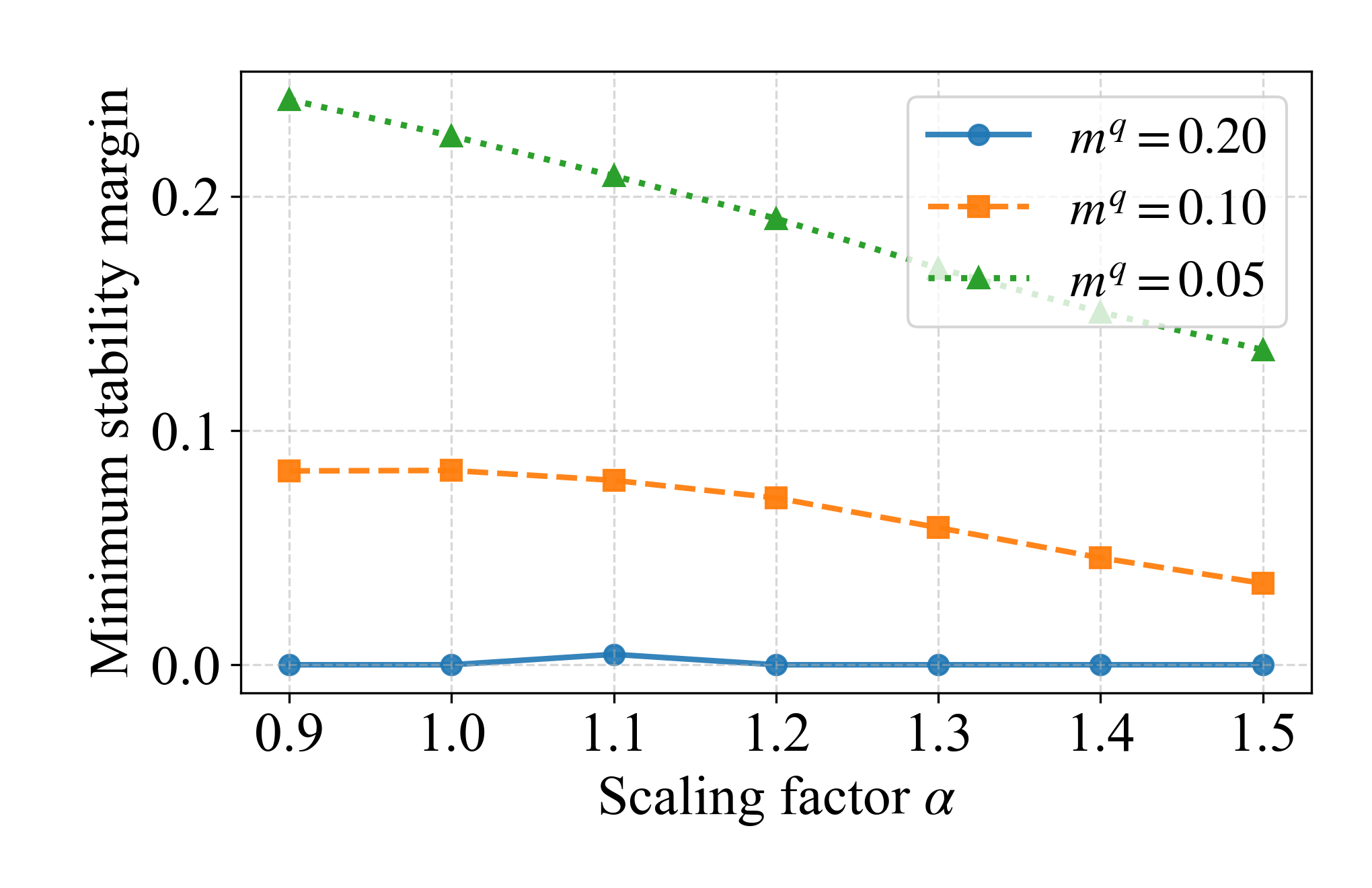}
    \put(0,5.5){\footnotesize (a)}
\end{overpic}
\end{minipage}
\hfill
\begin{minipage}{0.48\linewidth}
\centering
\begin{overpic}[width=\linewidth]{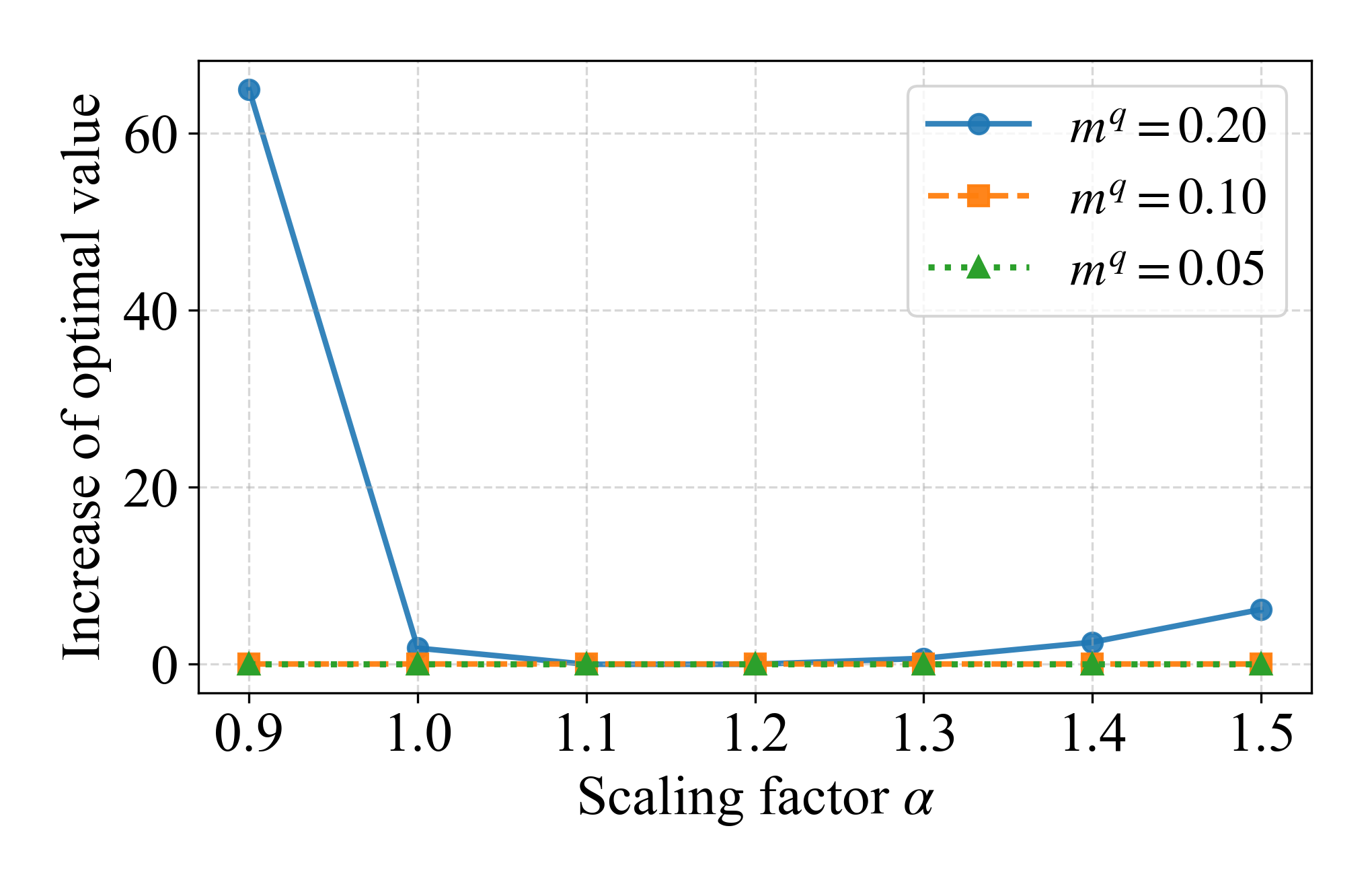}
    \put(0,5.5){\footnotesize (b)}
\end{overpic}
\end{minipage}

\caption{Effects of $m^q$ and $\alpha$ on:
(a) Minimum stability margin, and (b) Increase of optimal value in the IEEE 39-bus system with reactive power costs.}
\label{fig12}

\end{figure}

Figure~\ref{fig12} shows the effect of the network scaling factor $\alpha$ for different values of $m^q$ under the objective function that includes reactive power cost. For $m^q=0.05$ and $m^q=0.1$, the minimum stability margin remains positive across all values of $\alpha$ and decreases gradually as $\alpha$ increases. Correspondingly, the objective increase remains zero, indicating that the stability constraints are inactive throughout. In contrast, for $m^q=0.2$, a non-monotonic behavior is observed. The minimum margin is zero for small values of $\alpha$, becomes positive around $\alpha \approx 1.1$, and then returns to zero as $\alpha$ increases further. This is reflected more clearly in the U-shaped profile of the objective increase, which first decreases to zero and then increases again. Therefore, the stability constraints are active for small $\alpha$, become temporarily inactive near $\alpha \approx 1.1$, and become active again for larger $\alpha$. This non-monotonic behavior is primarily due to the fact that the susceptance matrix $B$ enters both the stability constraints and the power flow equations. As a result, scaling $\alpha$ simultaneously affects the constraint boundary and the optimal operating point. The minimum margin is therefore determined by the interplay between the tightening of the constraint and the changes in the voltage profile induced by the re-optimized solution. In particular, the temporary deactivation occurs when the change in the operating point reduces the voltage differences sufficiently to offset the tightening of the constraint. The inclusion of reactive power cost further modifies this behavior by altering the structure of the optimal solution. In particular, it changes how the operating point responds to variations in $\alpha$, making the interaction between constraint tightening and solution reconfiguration more pronounced. This mechanism is fundamentally different from the effect of $m^q$. While $m^q$ only rescales the stability constraints and leads to a monotonic transition from inactive to active constraints, $\alpha$ modifies both the constraint boundary and the optimal solution, leading to qualitatively different system responses.

\begin{figure}[!t]
\centering

\begin{minipage}{0.48\linewidth}
\centering
\begin{overpic}[width=\linewidth]{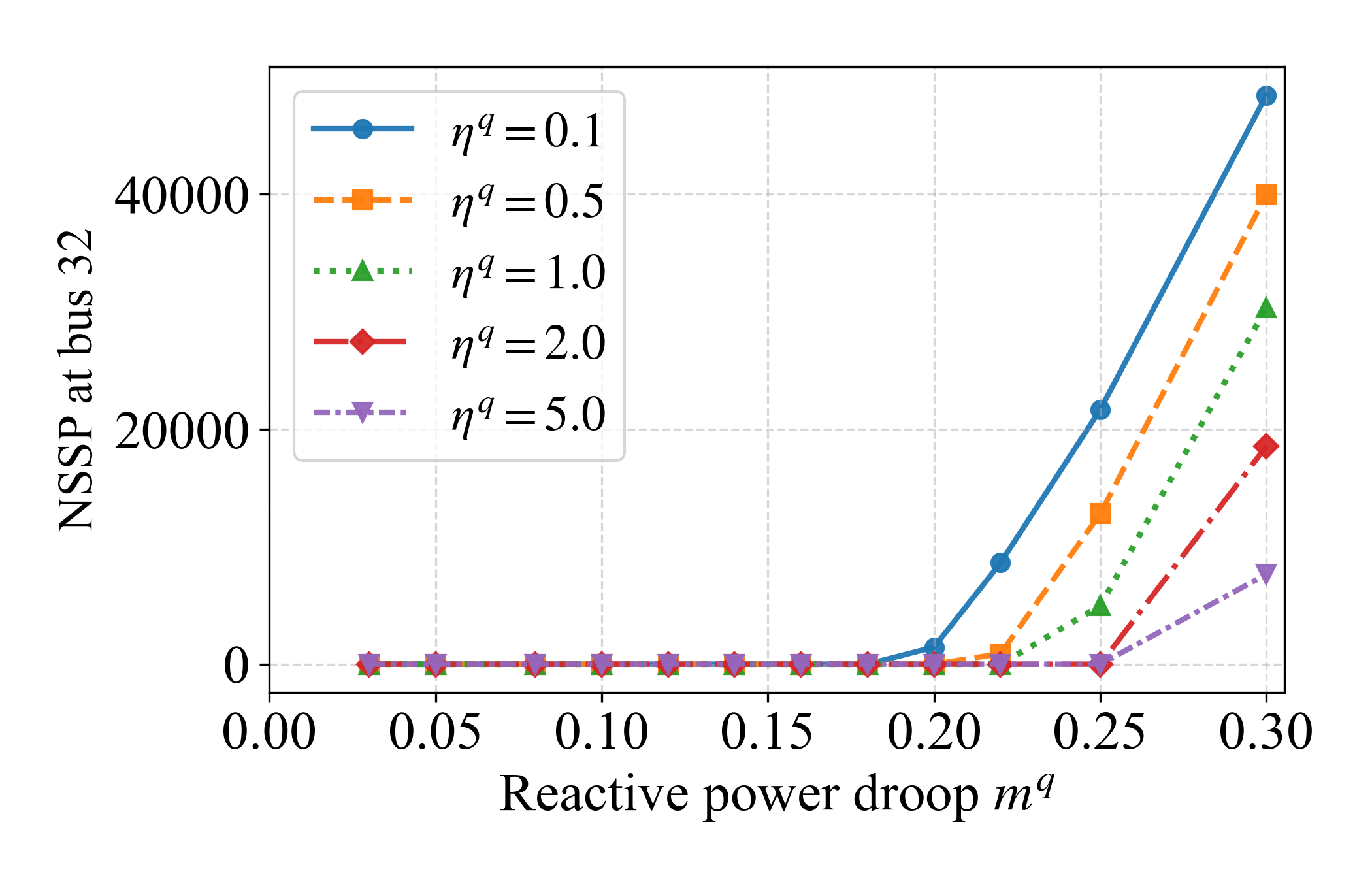}
    \put(0,5.5){\footnotesize (a)}
\end{overpic}
\end{minipage}
\hfill
\begin{minipage}{0.48\linewidth}
\centering
\begin{overpic}[width=\linewidth]{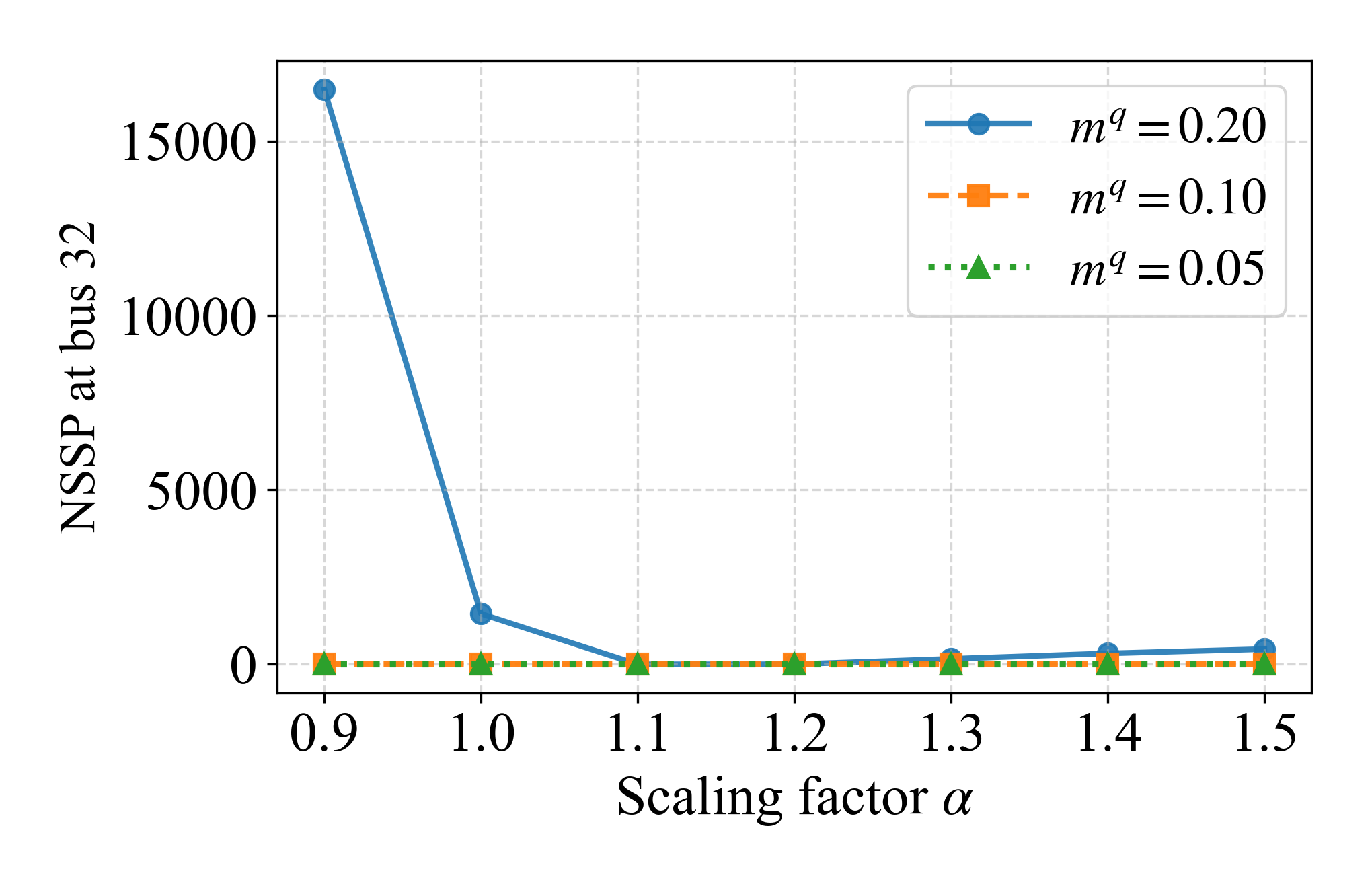}
    \put(0,5.5){\footnotesize (b)}
\end{overpic}
\end{minipage}

\caption{Effects of $m^q$, $\alpha$ and $\eta^q$ on the nodal stability shadow price at bus~32:
(a) Versus $m^q,\eta^q$, and (b) Versus $m^q,\alpha$ in the IEEE 39-bus system with reactive power costs.}
\label{fig14}

\end{figure}

Figure~\ref{fig14} illustrates the nodal stability shadow price \eqref{eq:NSSP} under the objective function that includes reactive power cost. 
As shown in Fig.~\ref{fig14}(a), the nodal shadow price at bus 32 remains essentially zero for small values of $m^q$, indicating that the stability constraints are inactive in this range. Once $m^q$ exceeds a critical value, the nodal shadow price increases rapidly, reflecting the activation of multiple stability constraints. This sharp increase indicates that further tightening of the stability constraints would incur a significant increase in the objective value. Moreover, larger values of $\eta^q$ lead to smaller shadow prices, suggesting that incorporating reactive power cost alleviates the marginal burden of the stability constraints. Fig.~\ref{fig14}(b) shows the variation of the nodal shadow price with respect to $\alpha$. For $m^q=0.2$, a large shadow price is observed at $\alpha=0.9$, where the stability constraints are strongly binding. As $\alpha$ increases, the shadow price drops rapidly to zero around $\alpha \approx 1.1$, indicating that the constraints become inactive. For larger values of $\alpha$, the shadow price increases again, reflecting the reactivation of the stability constraints. This non-monotonic (U-shaped) behavior is consistent with the observations in Fig.~\ref{fig12}, where the stability constraints exhibit a transition from active to inactive and back to active. These results are consistent with the earlier observations on stability margin and objective increase. While Fig.~\ref{fig12} characterizes whether the stability constraints are active,
Fig.~\ref{fig14} quantifies the marginal cost associated with enforcing the stability constraints. In particular, the nodal shadow price peaks in the most restrictive ranges and vanishes when the constraints become inactive, confirming that the non-monotonic behavior observed in the objective is directly reflected in the marginal prices of the stability constraints.

\section{Conclusion}

This paper proposed a decentralized (small-signal) stability-constrained OPF framework for inverter-based power systems. We identified a class of algebraic decentralized stability conditions that depend on local voltage differences, which are well suited for incorporation into steady-state optimization. 
From a theoretical perspective, we characterized the role of the decentralized stability constraints through their associated shadow prices. The concept of Nodal Stability Shadow Price (NSSP) is proposed that reflects the individual economic contribution of stability from each inverter. We proved that in lossless networks with active-power-only cost, stability constraints may be binding but the shadow prices are zero,
revealing a degeneracy case between stability and the objective structure. In practice, inverter-based resources (IBRs) have limited capacity and reactive power has an opportunity cost. By incorporating cost on reactive power in the objective, stability constraints admit positive shadow prices, and practical economic interpretations of stability constraints are provided. Numerical experiments on a two-bus system and the IEEE 39-bus system validate the proposed formulation and theoretical findings, illustrating the impact of stability constraints on the feasible region and optimal solutions, together with the important role of reactive power droop parameter. Future work will focus on extending the proposed framework to more general network models with losses, as well as exploring alternative stability criteria. In addition, it is of interest to investigate the co-optimization of control parameters and dynamic voltage support from IBRs in operational settings while considering various practical constraints. 

\appendix
\section*{Equivalence Between Max- and Split-Form Stability Constraints}

For each inverter bus $i\in\mathcal G$, define
\[
\psi_{ij}(V):=V_j-V_i-\Gamma_i,\qquad
j\in\mathcal N_i^{\mathrm{red}},
\]
and
\[
\psi_i(V):=\max_{j\in\mathcal N_i^{\mathrm{red}}}\psi_{ij}(V).
\]
Then the decentralized stability condition can be imposed either in the max form
\[
\psi_i(V)\le 0,\qquad \forall i\in\mathcal G,
\]
or in the split form
\[
\psi_{ij}(V)\le 0,\qquad \forall i\in\mathcal G,\ \forall j\in\mathcal N_i^{\mathrm{red}}.
\]

Consider the following two OPF formulations, which differ only in the representation of the stability constraints.

\medskip
\noindent\textbf{Max-form OPF:}
\begin{equation}
\begin{aligned}
\min_{x}\quad & J(x) \\
\text{s.t.}\quad
& g(x)=0,\\
& q(x)\le 0,\\
& \psi_i(V)\le 0,\qquad \forall i\in\mathcal G,
\end{aligned}
\label{eq:opf_max_form}
\end{equation}
where $x$ collects all decision variables, $g(x)=0$ denotes the equality constraints, and $q(x)\le 0$ denotes all inequality constraints other than the stability constraints.

\medskip
\noindent\textbf{Split-form OPF:}
\begin{equation}
\begin{aligned}
\min_{x}\quad & J(x) \\
\text{s.t.}\quad
& g(x)=0,\\
& q(x)\le 0,\\
& \psi_{ij}(V)\le 0,\qquad \forall i\in\mathcal G,\ \forall j\in\mathcal N_i^{\mathrm{red}}.
\end{aligned}
\label{eq:opf_split_form}
\end{equation}

\begin{proposition}
\label{prop:max_split_equivalence}
The max-form OPF \eqref{eq:opf_max_form} and the split-form OPF \eqref{eq:opf_split_form} have the same feasible set. Moreover, a feasible point is a KKT point of one formulation if and only if it is a KKT point of the other formulation. At any common KKT point, the associated stability multipliers can be chosen to satisfy
\[
\lambda_i^{\mathrm{stab}}
=
\sum_{j\in\mathcal N_i^{\mathrm{red}}}\lambda_{ij}^{\mathrm{stab}},
\qquad \forall i\in\mathcal G.
\]
\end{proposition}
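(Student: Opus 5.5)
Feasible-set equality is immediate: $\psi_i(V)\le 0$ holds iff $\psi_{ij}(V)\le 0$ for every $j\in\mathcal N_i^{\mathrm{red}}$, since a finite maximum is nonpositive iff each term is. Because both formulations share $J$, $g$, and $q$, they have the same feasible set, and all remaining work concerns the KKT conditions. The max form is nonsmooth, so I will read its KKT conditions in the Clarke sense. Stationarity requires $0\in\nabla J(x)+\nabla g(x)^\top\nu+\nabla q(x)^\top\kappa+\sum_i\lambda_i^{\mathrm{stab}}\,\partial\psi_i(V)$, with $\lambda_i^{\mathrm{stab}}\ge 0$ and $\lambda_i^{\mathrm{stab}}\psi_i(V)=0$. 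The key tool is the standard formula for the subdifferential of a pointwise maximum of finitely many smooth (here affine) functions: $\partial\psi_i(V)=\mathrm{conv}\{\nabla\psi_{ij}(V): j\in J_i(V)\}$, where $J_i(V):=\{j:\psi_{ij}(V)=\psi_i(V)\}$ is the set of maximizing indices. Since each $\psi_{ij}$ is affine with gradient $e_j-e_i$ in $V$, this is the polyhedral (exact) case of the formula.

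\emph{Max to split.} Take a KKT point of the max form. Each element of $\lambda_i^{\mathrm{stab}}\partial\psi_i$ can be written as $\sum_{j\in J_i}\lambda_i^{\mathrm{stab}}\theta_{ij}\nabla\psi_{ij}$ with $\theta_{ij}\ge0$ and $\sum_j\theta_{ij}=1$. Set $\lambda_{ij}^{\mathrm{stab}}:=\lambda_i^{\mathrm{stab}}\theta_{ij}$ for $j\in J_i$ and $0$ otherwise. The split stationarity equation is then identical to the max one, and nonnegativity is clear. For complementary slackness: if $\lambda_i^{\mathrm{stab}}>0$ then $\psi_i=0$, so every $j\in J_i$ satisfies $\psi_{ij}=0$; if $\lambda_i^{\mathrm{stab}}=0$ all $\lambda_{ij}^{\mathrm{stab}}$ vanish. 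Summing gives $\sum_j\lambda_{ij}^{\mathrm{stab}}=\lambda_i^{\mathrm{stab}}$.

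\emph{Split to max.} Take split multipliers $\lambda_{ij}^{\mathrm{stab}}\ge0$ and define $\lambda_i^{\mathrm{stab}}:=\sum_j\lambda_{ij}^{\mathrm{stab}}$. If $\lambda_i^{\mathrm{stab}}=0$ there is nothing to check. Otherwise some $\lambda_{ij}^{\mathrm{stab}}>0$, which forces $\psi_{ij}=0$ by complementary slackness; feasibility gives $\psi_i\le0$, hence $\psi_i=0$, and every $j$ with positive multiplier lies in $J_i$. The weights $\theta_{ij}:=\lambda_{ij}^{\mathrm{stab}}/\lambda_i^{\mathrm{stab}}$ then form a convex combination of active gradients, i.e.\ an element of $\partial\psi_i(V)$, so max-form stationarity and complementary slackness hold with this $\lambda_i^{\mathrm{stab}}$.

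The main point requiring care is the first step. One must fix the KKT notion for the nonsmooth max form and justify the exact convex-hull subdifferential formula; regularity holds because the $\psi_{ij}$ are affine. One must also check that the complementary-slackness bookkeeping only places split multipliers on indices that are both maximizing and binding. The phrase ``can be chosen'' in the statement reflects that the split multipliers need not be unique when several indices in $J_i$ tie.
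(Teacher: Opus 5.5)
Your proposal is correct and follows the paper's proof essentially step for step. Both arguments get feasible-set equality from the finite maximum. Both then use the convex-hull subdifferential formula to split $\lambda_i^{\mathrm{stab}}$ into $\lambda_i^{\mathrm{stab}}\theta_{ij}$ on the maximizing indices. Conversely, both normalize the split multipliers into convex weights, with the same complementary-slackness bookkeeping. The paper leaves three points implicit that you state explicitly: the subdifferential formula is exact because $\psi_i$ is a maximum of affine functions, $\psi_i(V^\star)=0$ holds in the split-to-max direction, and the non-uniqueness of multipliers is what the phrase ``can be chosen'' refers to.
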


\noindent\textit{Proof:}
Fix any $i\in\mathcal G$. Suppose first that $\psi_i(V)\le 0$. Since
\[
\psi_i(V)=\max_{j\in\mathcal N_i^{\mathrm{red}}}\psi_{ij}(V),
\]
it follows that, for every $j\in\mathcal N_i^{\mathrm{red}}$,
\[
\psi_{ij}(V)\le \max_{k\in\mathcal N_i^{\mathrm{red}}}\psi_{ik}(V)=\psi_i(V)\le 0.
\]
Conversely, suppose that
\[
\psi_{ij}(V)\le 0,\qquad \forall j\in\mathcal N_i^{\mathrm{red}}.
\]
Then
\[
\psi_i(V)=\max_{j\in\mathcal N_i^{\mathrm{red}}}\psi_{ij}(V)\le 0.
\]
Therefore, for each $i\in\mathcal G$,
\[
\psi_i(V)\le 0
\quad\Longleftrightarrow\quad
\psi_{ij}(V)\le 0,\qquad \forall j\in\mathcal N_i^{\mathrm{red}}.
\]
Since this holds for every $i\in\mathcal G$, the two formulations have the same feasible set.

\textcolor{black}{We next show that a feasible point is a KKT point of the max-form OPF if and only if it is a KKT point of the split-form OPF.}

\noindent\textbf{(i) From max form to split form.} Assume first that $x^\star$ is a KKT point of the max-form OPF \eqref{eq:opf_max_form}. Then there exist multipliers $\lambda^\star$ for $g(x)=0$, $\nu^\star\ge 0$ for $q(x)\le 0$, and $\lambda_i^{\mathrm{stab}}\ge 0$ for $\psi_i(V)\le 0$, such that primal feasibility, dual feasibility, and complementary slackness hold, together with the stationarity condition
\begin{equation}
0\in
\nabla J(x^\star)
+\nabla g(x^\star)^\top\lambda^\star
+\nabla q(x^\star)^\top\nu^\star
+\sum_{i\in\mathcal G}\lambda_i^{\mathrm{stab}}\,\partial\psi_i(V^\star).
\label{eq:max_stationarity}
\end{equation}
Here, $\partial\psi_i$ is the subdifferential of $\psi_i$ and is given by
\[
\partial\psi_i(V^\star)
=
\operatorname{conv}
\left\{
\nabla\psi_{ij}(V^\star):
j\in\mathcal A_i(V^\star)
\right\},
\]
where $\operatorname{conv}\{\cdot\}$ denotes the convex hull of a set, and $\mathcal A_i(V^\star)$ is the set of indices attaining the maximum, i.e., 
\[
\mathcal A_i(V^\star):=
\left\{
j\in\mathcal N_i^{\mathrm{red}}:
\psi_{ij}(V^\star)=\psi_i(V^\star)
\right\}.
\]


Hence, for each $i\in\mathcal G$, there exists convex combination coefficients $\alpha_{ij}\ge 0$, such that  
\[
\zeta_i^\star=
\sum_{j\in\mathcal A_i(V^\star)}
\alpha_{ij}\nabla\psi_{ij}(V^\star)\in\partial\psi_i(V^\star),
\]
and \eqref{eq:max_stationarity} becomes
\begin{equation}
0=
\nabla J(x^\star)
+\nabla g(x^\star)^\top\lambda^\star
+\nabla q(x^\star)^\top\nu^\star
+\sum_{i\in\mathcal G}\lambda_i^{\mathrm{stab}}\zeta_i^\star.
\label{eq:max_stationarity_selected}
\end{equation}

Now define
\[
\lambda_{ij}^{\mathrm{stab}}:=
\begin{cases}
\lambda_i^{\mathrm{stab}}\alpha_{ij},
& j\in\mathcal A_i(V^\star),\\[1mm]
0,
& j\notin\mathcal A_i(V^\star).
\end{cases}
\]
Then $\lambda_{ij}^{\mathrm{stab}}\ge 0$ for all $i,j$, and
\[
\sum_{j\in\mathcal N_i^{\mathrm{red}}}
\lambda_{ij}^{\mathrm{stab}}\nabla\psi_{ij}(V^\star)
=
\lambda_i^{\mathrm{stab}}\zeta_i^\star.
\]
Substituting this identity into \eqref{eq:max_stationarity_selected} gives
\begin{align}
0=
\nabla J(x^\star)
&+\nabla g(x^\star)^\top\lambda^\star
+\nabla q(x^\star)^\top\nu^\star\nonumber\\
&+\sum_{i\in\mathcal G}\sum_{j\in\mathcal N_i^{\mathrm{red}}}
\lambda_{ij}^{\mathrm{stab}}\nabla\psi_{ij}(V^\star),\nonumber
\end{align}
which is exactly the stationarity condition for the split-form OPF \eqref{eq:opf_split_form}.

It remains to verify complementary slackness for the split-form stability multipliers. If $j\notin\mathcal A_i(V^\star)$, then $\lambda_{ij}^{\mathrm{stab}}=0$ by construction, so
\[
\lambda_{ij}^{\mathrm{stab}}\psi_{ij}(V^\star)=0.
\]
If $j\in\mathcal A_i(V^\star)$ and $\lambda_i^{\mathrm{stab}}=0$, then again $\lambda_{ij}^{\mathrm{stab}}=0$, so the same conclusion holds. Finally, if $j\in\mathcal A_i(V^\star)$ and $\lambda_i^{\mathrm{stab}}>0$, then complementary slackness for the max-form constraint implies
\[
\lambda_i^{\mathrm{stab}}\psi_i(V^\star)=0,
\]
hence
\[
\psi_i(V^\star)=0.
\]
Since $j\in\mathcal A_i(V^\star)$, we also have
\[
\psi_{ij}(V^\star)=\psi_i(V^\star)=0.
\]
Therefore,
\[
\lambda_{ij}^{\mathrm{stab}}\psi_{ij}(V^\star)=0.
\]
Thus all KKT conditions of the split-form OPF are satisfied, and $x^\star$ is a KKT point of \eqref{eq:opf_split_form}.

\noindent\textbf{(ii) From split form to max form.}
We now prove the converse direction. Assume that $x^\star$ is a KKT point of the split-form OPF \eqref{eq:opf_split_form}. Then there exist multipliers $\lambda^\star$ for $g(x)=0$, $\nu^\star\ge 0$ for $q(x)\le 0$, and $\lambda_{ij}^{\mathrm{stab}}\ge 0$ for $\psi_{ij}(V)\le 0$, such that primal feasibility, dual feasibility, and complementary slackness hold, together with
\begin{align}
0=
\nabla J(x^\star)
&+\nabla g(x^\star)^\top\lambda^\star
+\nabla q(x^\star)^\top\nu^\star\nonumber\\
&+\sum_{i\in\mathcal G}\sum_{j\in\mathcal N_i^{\mathrm{red}}}
\lambda_{ij}^{\mathrm{stab}}\nabla\psi_{ij}(V^\star).
\label{eq:split_stationarity}
\end{align}

For each $i\in\mathcal G$, define
\[
\lambda_i^{\mathrm{stab}}
:=
\sum_{j\in\mathcal N_i^{\mathrm{red}}}\lambda_{ij}^{\mathrm{stab}}.
\]
If $\lambda_i^{\mathrm{stab}}=0$, then all $\lambda_{ij}^{\mathrm{stab}}=0$, and the contribution of bus $i$ to \eqref{eq:split_stationarity} is zero. Suppose now that $\lambda_i^{\mathrm{stab}}>0$, and define
\[
\alpha_{ij}:=
\frac{\lambda_{ij}^{\mathrm{stab}}}{\lambda_i^{\mathrm{stab}}},
\qquad
j\in\mathcal N_i^{\mathrm{red}}.
\]
Then $\alpha_{ij}\ge 0$ and
\[
\sum_{j\in\mathcal N_i^{\mathrm{red}}}\alpha_{ij}=1.
\]

Next, by complementary slackness for the split-form problem,
\[
\lambda_{ij}^{\mathrm{stab}}\psi_{ij}(V^\star)=0,
\qquad
\forall j\in\mathcal N_i^{\mathrm{red}}.
\]
Therefore, if $\lambda_{ij}^{\mathrm{stab}}>0$, then
\[
\psi_{ij}(V^\star)=0.
\]
Since $x^\star$ is feasible for the split-form problem, all $\psi_{ij}(V^\star)\le 0$, and hence
\[
\psi_i(V^\star)=\max_{j\in\mathcal N_i^{\mathrm{red}}}\psi_{ij}(V^\star)\le 0.
\]
Moreover, any index $j$ with $\lambda_{ij}^{\mathrm{stab}}>0$ must attain this maximum, because it satisfies $\psi_{ij}(V^\star)=0$. Therefore,
\[
\sum_{j\in\mathcal N_i^{\mathrm{red}}}\!
\alpha_{ij}\nabla\psi_{ij}(V^\star)
\!\in\!
\operatorname{conv}
\left\{
\nabla\psi_{ij}(V^\star)\!:\!
j\!\in\!\mathcal A_i(V^\star)
\right\}
\!=\!
\partial\psi_i(V^\star).
\]
Thus there exists
\[
\zeta_i^\star\in\partial\psi_i(V^\star)
\]
such that
\[
\sum_{j\in\mathcal N_i^{\mathrm{red}}}
\lambda_{ij}^{\mathrm{stab}}\nabla\psi_{ij}(V^\star)
=
\lambda_i^{\mathrm{stab}}\zeta_i^\star.
\]
If $\lambda_i^{\mathrm{stab}}=0$, the same identity holds trivially with zero contribution. Substituting these identities into \eqref{eq:split_stationarity}, we obtain
\[
0\in
\nabla J(x^\star)
+\nabla g(x^\star)^\top\lambda^\star
+\nabla q(x^\star)^\top\nu^\star
+\sum_{i\in\mathcal G}\lambda_i^{\mathrm{stab}}\partial\psi_i(V^\star),
\]
which is exactly the stationarity condition for the max-form OPF \eqref{eq:opf_max_form}.

It remains to verify complementary slackness for the max-form stability multipliers. If $\lambda_i^{\mathrm{stab}}=0$, then
\[
\lambda_i^{\mathrm{stab}}\psi_i(V^\star)=0
\]
holds trivially. Suppose now that $\lambda_i^{\mathrm{stab}}>0$. Then at least one $j\in\mathcal N_i^{\mathrm{red}}$ satisfies $\lambda_{ij}^{\mathrm{stab}}>0$, which implies
\[
\psi_{ij}(V^\star)=0.
\]
Since all $\psi_{ij}(V^\star)\le 0$, it follows that
\[
\psi_i(V^\star)=\max_{j\in\mathcal N_i^{\mathrm{red}}}\psi_{ij}(V^\star)=0.
\]
Therefore,
\[
\lambda_i^{\mathrm{stab}}\psi_i(V^\star)=0.
\]
Thus all KKT conditions of the max-form OPF are satisfied, and $x^\star$ is a KKT point of \eqref{eq:opf_max_form}.

\textcolor{black}{Finally, we characterize the relationship between the stability multipliers.}
From the construction in the max-form to split-form direction, we have
\[
\lambda_{ij}^{\mathrm{stab}} = \lambda_i^{\mathrm{stab}}\alpha_{ij},
\qquad
\sum_{j\in\mathcal A_i(V^\star)} \alpha_{ij}=1,
\]
which directly implies
\[
\sum_{j\in\mathcal N_i^{\mathrm{red}}} \lambda_{ij}^{\mathrm{stab}}
=
\lambda_i^{\mathrm{stab}}.
\]

Conversely, in the split-form to max-form direction, the multiplier
$\lambda_i^{\mathrm{stab}}$ is defined as
\[
\lambda_i^{\mathrm{stab}}
=
\sum_{j\in\mathcal N_i^{\mathrm{red}}}\lambda_{ij}^{\mathrm{stab}}.
\]

Therefore, at any common KKT point, the stability multipliers of the two
formulations can be chosen to satisfy
\[
\lambda_i^{\mathrm{stab}}
=
\sum_{j\in\mathcal N_i^{\mathrm{red}}}\lambda_{ij}^{\mathrm{stab}},
\qquad \forall i\in\mathcal G. 
\hspace{3.5cm} \blacksquare
\]

\bibliographystyle{IEEEtran}
\bibliography{reference}

\end{document}